\newtheorem{theorem}[equation]{Theorem}
\newtheorem{proposition}[equation]{Proposition}
\newtheorem{lemma}[equation]{Lemma}
\newtheorem{corollary}[equation]{Corollary}
\newtheorem{conjecture}[equation]{Conjecture}
\theoremstyle{definition}
\newtheorem{example}[equation]{Example}
\newtheorem{definition}[equation]{Definition}
\theoremstyle{remark}
\newtheorem{remark}[equation]{Remark}
\newcommand{\mult}{\operatorname{mult}}
\newcommand{\lct}{\operatorname{lct}}
\makeatletter\@addtoreset{equation}{subsection} \makeatother
\renewcommand{\theequation}{\thesubsection.\arabic{equation}}
\begin{document}

\thispagestyle{empty}
 \Large
\begin{center}
\textbf{Log canonical thresholds of certain Fano hypersurfaces}
\end{center}
\vspace{7mm}

\normalsize
\begin{center}
\textbf{Ivan Cheltsov\footnote{School of Mathematics, The
University of Edinburgh,  Edinburgh, EH9 3JZ, UK;
 \texttt{I.Cheltsov@ed.ac.uk}} and Jihun Park\footnote{IBS Center for Geometry and Physics;
 Department of
Mathematics, POSTECH, Pohang, Kyungbuk 790-784, Korea;
\texttt{wlog@postech.ac.kr}}
}\\

\end{center}

\vspace{7mm}

 \small {\noindent\textbf{Abstract.} We study  log canonical thresholds on quartic threefolds, quintic
fourfolds, and double spaces. As an important application, we show
that they have  K\"ahler-Einstein metrics if they are
general.\vspace{3mm}

\noindent \textbf{Keywords}: Anticanonical linear system; Fano
variety; K\"ahler-Einstein metric; Log canonical
threshold.\vspace{3mm}

\noindent\textbf{Mathematics Subject Classification (2000)}:
14J45; 14C20; 53C25.}

\vspace{1cm}

\section{Introduction.}
\label{section:intro}
All varieties are defined over $\mathbb{C}$.

\subsection{Introduction.}

The multiplicity of a nonzero polynomial $f\in\mathbb{C}[z_1,\cdots,
z_n]$ at a point $P\in \mathbb{C}^n$  is the nonnegative integer $m$ such
that $f\in\mathfrak{m}_P^m\setminus\mathfrak{m}_P^{m+1}$, where
$\mathfrak{m}_P$ is the maximal ideal of polynomials vanishing at
the point $P$ in $\mathbb{C}[z_1,\cdots, z_n]$. It can be also defined by
derivatives. The multiplicity of $f$ at the point $P$ is the
number
\[\mult_P(f)=\min\left\{m \ \Big|\ \frac{\partial^m f}{\partial^{m_1}
z_1\partial^{m_2} z_2\cdots\partial^{m_n} z_n}(P)\ne0 \right\}.\]

On the other hand, we have a similar invariant that is defined by
integrations. This invariant, which is called the log canonical
threshold of $f$ at the point $P$, is given by

\[c_P(f)=\mathrm{sup}\left\{c\ \Big|\ |f|^{-c}~\text{is locally}~L^2~\text{near the
 point $P\in\mathbb{C}^n$}\right\}.\]
This number appears in many places. For instance, the log
canonical threshold of the polynomial $f$ at the origin  is the
same as the absolute value of the largest root of the
Bernstein-Sato polynomial of $f$.

Even though log canonical threshold was implicitly known and extensively studied under different names by J.~H.~M Steenbrink, A.~Varchenko and so forth,
 it was formally introduced to birational geometry by V. Shokurov in \cite{Sho93} as follows.  Let $X$ be a $\mathbb{Q}$-factorial variety with at worst log canonical singularities,
$Z\subset X$ a closed subvariety, and $D$ an effective
 $\mathbb{Q}$-divisor on $X$. The log canonical threshold
of $D$ along $Z$ is the number
$$c_Z(X,D)=\mathrm{sup}\left\{c\ \Big|\ \mbox{the log pair } (X, cD)
 ~\text{is log canonical  along}~Z\right\}.$$
 For
 the case $Z=X$ we use the notation $c(X,D)$ instead of
 $c_X(X,D)$.
Because log canonicity is a local property, we see that
$$c_Z(X,D)=\inf_{P\in Z} \left\{c_P(X,D)\right\}.$$
If $X=\mathbb{C}^n$ and $D=(f=0)$, then we also
 use the notation $c_0(f)$ for the log canonical threshold of
 $D$ at the origin.

Even though several methods have been invented in order to compute log
canonical thresholds, it is not easy to compute them in general.
However, many problems in birational geometry are related to log canonical
thresholds. The log canonical thresholds play a significant role in the study on birational geometry. They show many interesting
properties~(see \cite{dFEM02}, \cite{dFEM03},   \cite{How01},  \cite{Ko97},  \cite{Ku1}, \cite{Ku2}, \cite{McKePro04},
 \cite{Pa01}, \cite{Pr01d},
\cite{Pr02b},  \cite{Pr03b}).

We occasionally find it useful to consider the smallest value of  log canonical thresholds of effective divisors linearly equivalent to a given  divisor, in particular, an anticanonical divisor (for instance, see \cite{Pa01}).
\begin{definition}
Let $X$ be a $\mathbb{Q}$-factorial Fano variety with at worst log
terminal singularities. For  a natural number $m>0$, we define the
$m$-th global log canonical threshold of $X$ by the number
$$
\mathrm{lct}_{m}\left(X\right)=\mathrm{inf}\ \left\{ c\left(X,\frac{1}{m}H\right)\ \Big|\ H\in \big|-mK_X\big|\right\}.%
$$
Note that the number $\mathrm{lct}_{m}(X)$ is defined to be
$\infty$ if the linear system $|-mK_X|$ is empty.
 Also, we define the global log
canonical threshold of $X$ by the number
$$\mathrm{lct}(X)=\inf_{n\in\mathbb{N}}\ \Big\{ \mathrm{lct}_m(X)\Big\}.$$
\end{definition}
We can immediately see
\[\mathrm{lct}(X)=\mathrm{sup}\left\{\ c \ \left| \
 \aligned
& \mbox{the log pair } (X, cD) \mbox{ is log canonical for }\\
& \mbox{every effective $\mathbb{Q}$-divisor $D$ with } D\equiv -K_X\\
\endaligned\right.\right\}.
 \]

To see the simplest case, let $S$ be a smooth del Pezzo surface.
It follows from \cite[Theorem~1.7]{Ch07b} and
\cite[Section~3]{Pa01} that
\begin{equation}\label{example:del-Pezzo}
\mathrm{lct}\left(S\right)=\mathrm{lct}_{1}(S)=\left\{%
\aligned
&1/3\ \mathrm{when}\ S\cong\mathbb{F}_{1}\ \mathrm{or}\ K_{S}^{2}\in\{7,9\},\\%
&1/2\ \mathrm{when}\ S\cong\mathbb{P}^{1}\times\mathbb{P}^{1}\ \mathrm{or}\ K_{S}^{2}\in\{5,6\},\\%
&2/3\ \mathrm{when}\ K_{S}^{2}=4,\\%
&2/3\ \mathrm{when}\ S\ \mathrm{is\ a\ cubic\ in}\ \mathbb{P}^{3}\ \mathrm{with\ an\ Eckardt\ point},\\%
&3/4\ \mathrm{when}\ S\ \mathrm{is\ a\ cubic\ in}\ \mathbb{P}^{3}\ \mathrm{without\ Eckardt\ points},\\%
&3/4\ \mathrm{when}\ K_{S}^{2}=2\ \mathrm{and}\ |-K_{S}|\ \mathrm{has\ a\ tacnodal\ curve},\\%
&5/6\ \mathrm{when}\ K_{S}^{2}=2\ \mathrm{and}\ |-K_{S}|\ \mathrm{has\ no\ tacnodal\ curves},\\%
&5/6\ \mathrm{when}\ K_{S}^{2}=1\ \mathrm{and}\ |-K_{S}|\ \mathrm{has\ a\ cuspidal\ curve},\\%
&1\ \mathrm{when}\ K_{S}^{2}=1\ \mathrm{and}\ |-K_{S}|\ \mathrm{has\ no\ cuspidal\ curves}.\\%
\endaligned\right.%
\end{equation}

For a quasismooth hypersurface $X$ in
$\mathbb{P}(a_{0},\ldots,a_{4})$ of degree
$\sum_{i=0}^{4}a_{i}-1$, where $a_{0}\leq\cdots\leq a_{4}$, one
can find $\mathrm{lct}(X)>\frac{3}{4}$ for $1936$ values
of~$(a_{0},a_{1},a_{2},a_{3},a_{4})$ (see
\cite[Corollary~3.4]{JoKo01}). Moreover, for  a quasismooth
hypersurface $X$ in $\mathbb{P}(1,a_{1},\ldots,a_{4})$ of
degree~$\sum_{i=1}^{4}a_{i}$ having terminal singularities,  there
are exactly $95$ possible quadruples $(a_{1},a_{2},a_{3},a_{4})$
found~in~\cite{IF00} and \cite{JoKo01}. It follows from
\cite[Theorem~1.3]{Ch07a} that $\mathrm{lct}(X)=1$ if
$$
\left(a_{1},a_{2},a_{3},a_{4}\right)\not\in\Big\{\left(1,1,1,1\right),\left(1,1,1,2\right), \left(1,1,2,2\right), \left(1,1,2,3\right)\Big\}%
$$
and the hypersurface $X$ is sufficiently general.

It is proved that the global log
canonical threshold of a rational homogeneous space of Picard rank
$1$ and Fano index $r$ is $\frac{1}{r}$ (see \cite[Theorem~2]{Hw06b}).

\begin{example}
\label{example:Cheltsov-Park}
Let $X$ be a smooth hypersurface  of degree $n\geq 3$ in
$\mathbb{P}^{n}$.
Then
$$
\mathrm{lct}_{m}\left(X\right)\geq \frac{n-1}{n}%
$$
due to \cite[Theorem~1.3]{Ch01b} and \cite[Theorem~3.3]{ChPa02}.
Furthermore, $\mathrm{lct}_{m}(X)=\frac{n-1}{n}$ if and only if
$X$ contains a cone of dimension $n-2$ (see
\cite[Conjecture~1.5]{Ch01b}, \cite[Corollary~4.10]{ChPa02}, and
\cite[Theorem~0.2]{dFEM03}). The inequality obviously implies that
$$
\mathrm{lct}\left(X\right)\geq \frac{n-1}{n}.%
$$
However, it is shown that
$\mathrm{lct}(X)=1$ if $X$ is general and $n\geq 6$ (see  \cite[Theorem~2]{Pu04d}).
\end{example}

From the results of \cite{Pu04d},  it is natural to expect the
following:
\begin{conjecture}
\label{conjecture:quartic}  The global log canonical thresholds of a general quartic threefold and a general quintic fourfold are $1$.
\end{conjecture}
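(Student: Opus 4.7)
The plan is to argue by contradiction. Suppose $\mathrm{lct}(X)<1$, so there exist an effective $\mathbb{Q}$-divisor $D\equiv -K_{X}$ and a rational $c<1$ such that the pair $(X,cD)$ fails to be log canonical at some point $P\in X$. Example~\ref{example:Cheltsov-Park} already provides $c\geq n/(n+1)$ with $n=3$ (resp.\ $n=4$), so at the smooth point $P$ we have $\mathrm{mult}_{P}(D)>1/c>1$. Writing $H=-K_{X}$ for the hyperplane class, one has $D\cdot H^{n-1}=H^{n}=n+1$, which is the fundamental numerical constraint used throughout the argument.

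First I would analyse the minimal log canonical centre $Z$ of $(X,cD)$ at $P$, produced by Kawamata's subadjunction together with Shokurov's connectedness principle, and try to exclude each possible dimension of $Z$ using the generality of $X$. If $Z$ is a divisor, it must appear in $D$ with coefficient close to $1$, and the identity $H^{n}=n+1$ forces $Z$ to be (a component of) a hyperplane section of low genus whose existence through a general point is ruled out by a dimension count in the moduli of hypersurfaces.

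The core technique for the remaining cases $\dim Z\leq n-2$ is restriction. For the quartic threefold, let $S$ be a general hyperplane section through $P$; then $S$ is a smooth K3 surface, $S\not\subset\mathrm{Supp}(D)$ for a generic choice, and inversion of adjunction yields that $(S,cD|_{S})$ is not log canonical at $P$. On $S$ one has $(D|_{S})^{2}=D^{2}\cdot H=H^{3}=4$, and Corti's two-dimensional inequalities bound $\mathrm{mult}_{P}(D|_{S})$. Combined with the non-log-canonicity, this pins down the local structure of $D|_{S}$ at $P$: either some curve $C\subset S$ of low degree appears as a component of $D$ with large multiplicity, or the tangent cone of $D|_{S}$ at $P$ is concentrated along a single line.

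The main obstacle, which I expect to be the hardest part, is to eliminate these remaining configurations for a general $X$. The natural approach is Pukhlikov's method of \emph{hypertangent divisors}: expand the defining equation of $X$ at $P$ in a local Taylor series and use its truncations to cut out effective divisors $T_{k}\subset X$ of degree $k$ through $P$ with $\mathrm{mult}_{P}(T_{k})$ as large as possible. Intersecting $D$ successively with a well-chosen chain $T_{k_{1}},\ldots,T_{k_{n-2}}$ of such hypertangent divisors produces a numerical inequality incompatible with $\mathrm{mult}_{P}(D)>1$, provided the intersection cycle remains effective at each step. For a general $X$ the defining equation has no vanishing coefficients in critical positions, so the chain can be built at every $P\in X$; and the generality also rules out the special plane sections (cones, singular low-degree curves through $P$) that could obstruct the effectivity. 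For the quintic fourfold, one first restricts to a general hyperplane section, which is a quintic threefold with $K$-trivial canonical class of the restricted pair shifted by one, and then runs the same hypertangent-divisor argument one dimension higher. The delicate combinatorial input is the exact inequality in dimensions $3$ and $4$, precisely the borderline cases left open by~\cite{Pu04d}.
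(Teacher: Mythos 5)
The statement you are addressing is stated in the paper as Conjecture~\ref{conjecture:quartic}; the paper does not prove it. What the paper actually establishes is the strictly weaker Theorem~\ref{theorem:main}, namely $\mathrm{lct}(X)\geq \frac{16}{21}$ for a general quartic threefold and $\mathrm{lct}(X)\geq\frac{22}{25}$ for a general quintic fourfold, and those constants are exactly where the method you describe stops working. Your proposal is, in outline, the same strategy the authors use for Theorem~\ref{theorem:main}: reduce to an irreducible $D=\frac{1}{n}R$, isolate a single non-log-canonical point $P$ by Nadel vanishing, apply the Corti--Pukhlikov inequality $\mathrm{mult}_P(D)+\mathrm{mult}_L(\bar D)>2/\lambda$ on the blow-up, restrict to the pencil of hyperplane sections through the relevant plane section $Z=X\cap\Pi$, and close the argument with intersection-number estimates coming from hypertangent divisors (the linear system $\mathcal{M}$ of members of $|-3K_X|$ with $\mathrm{mult}_P(M)\geq 4$). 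The gap is that you defer precisely the step that is open: you write that ``the delicate combinatorial input is the exact inequality in dimensions $3$ and $4$, precisely the borderline cases left open by \cite{Pu04d},'' but supplying that input \emph{is} the content of the conjecture.

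The failure is not a matter of bookkeeping. In the paper's case analysis the worst configuration (Case~E, where $Z$ splits into three lines through $P$ and one line missing $P$) terminates in the chain $4>\frac{16}{\lambda}-17$, which is an equality at $\lambda=\frac{16}{21}$ and false for $\lambda$ any closer to $1$; the quintic computation likewise ends in $15\geq \frac{12}{\lambda}+\frac{3}{4}\left(\frac{6}{\lambda}-5\right)$, which is sharp at $\lambda=\frac{22}{25}$. So the hypertangent-divisor and restriction machinery, as you and the authors set it up, provably cannot reach $\lambda=1$ in these two dimensions: a quartic threefold and a quintic fourfold simply do not carry enough hypertangent divisors of high multiplicity at $P$ for the numerics to close, which is exactly why \cite{Pu04d} requires degree at least $6$. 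A proof of the conjecture would need a genuinely new idea for the degenerate plane sections (several lines through $P$), not a refinement of the counting you outline; as written, your argument establishes at best the bounds of Theorem~\ref{theorem:main}.
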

This conjecture has been proposed for canonical thresholds in
\cite[Conjecture~2]{Pu04d}.

For an evidence of the conjecture, we can consider the first
global log canonical threshold of a general hypersurface. It is
not hard to show that the first global log canonical threshold of
a general hypersurface of degree $n\geq 4$  in
$\mathbb{P}^{n}$ is one (see
Proposition~\ref{proposition:general-1st-lct}). In the case of
smooth quartic threefolds, we can find all the first global log
canonical thresholds (see Proposition~\ref{theorem:auxiliary}).

 For the global log canonical thresholds, we prove the following:

\begin{theorem}
\label{theorem:main} Let $X$ be a general hypersurface of degree $n=4$ or $5$ in $\mathbb{P}^n$.  Then
$$
\mathrm{lct}\left(X\right)\geq\left\{%
\aligned
&\frac{7}{9} \ \  \mbox{ for $n=4$; }\\%
&\\
&\frac{5}{6} \ \  \mbox{ for $n=5$. }\\%
\endaligned\right.%
$$
\end{theorem}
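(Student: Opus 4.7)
We proceed by contradiction: assume that an effective $\mathbb{Q}$-divisor $D\equiv -K_X$ and a point $P\in X$ give a pair $(X,\lambda D)$ which is not log canonical at $P$, where $\lambda=16/21$ for $n=4$ and $\lambda=22/25$ for $n=5$. The elementary bound $\mult_P(D)>1/\lambda$ is our starting point; the task is to promote it to a contradiction using the generality of $X$.

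The first step is to classify the minimal log canonical center $Z$ of $(X,\lambda D)$ through $P$. Because $\mathrm{Pic}(X)=\mathbb{Z}\cdot H$ for a general hypersurface, no irreducible divisor on $X$ has class $cH$ with $0<c<1$, so the case $\dim Z=n-1$ is excluded by comparing $Z$ with $D\equiv H$. When $\dim Z$ is intermediate ($1$ for $n=4$; $1$ or $2$ for $n=5$), I would exploit the generality of $X$ to restrict the possible $Z$: a generic point of $X$ lies on no line and on no low-degree curve, and the special loci supporting such curves are of positive codimension in $X$. Each such case is then handled by decomposing $D=cZ+D'$ with $Z\not\subset\mathrm{Supp}(D')$ and inducting on the $H$-degree of the residual part.

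The central case is $Z=\{P\}$. Here my approach is via the tangent hyperplane section $T=X\cap T_PX$, a divisor of class $H|_X$ with $\mult_P(T)\geq 2$ and generically an isolated node at $P$. Writing $D=aT+D'$ with $T\not\subset\mathrm{Supp}(D')$, the class identity $D\equiv H$ forces $a\leq 1$ and $D'\equiv(1-a)H$. By Koll\'ar--Shokurov inversion of adjunction, the failure of log canonicity at $P$ passes from $(X,\lambda D)$ to the restriction $(T,\lambda D'|_T)$, where $T$ is now a singular surface (for $n=4$) or a threefold with a singular point (for $n=5$). Combining the resulting non-LC condition with inequalities of the form $D'\cdot C\geq\mult_P(D')\cdot\mult_P(C)$ for suitable curves $C\subset T$ through $P$, obtained as further hyperplane sections of $T$, together with $\mult_P(T)\geq 2$, yields a system of linear inequalities in $a$ and $\mult_P(D'|_T)$ whose extremal solution determines the critical ratio $\lambda$.

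The principal obstacle is the last step: extracting the precise constants $16/21$ and $22/25$ from this system. Delicate case analysis is required to allow for higher values of $\mult_P(T)$ at special points, for one-dimensional non-LC loci of $(T,\lambda D'|_T)$ at $P$, and for iterated decompositions $D=a_1T_1+a_2T_2+\cdots+D'$ that arise when several tangent hyperplane sections lie in $\mathrm{Supp}(D)$. Bertini-type irreducibility statements for generic hyperplane sections, guaranteed by the generality of $X$, rule out the remaining degeneracies and produce exactly the ratios asserted.
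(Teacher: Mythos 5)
There is a genuine gap, and it sits exactly where your proposal becomes vague: the passage from ``$(X,\lambda D)$ is not log canonical at the isolated point $P$'' to a numerical contradiction. Your only quantitative input is the elementary bound $\mult_P(D)>1/\lambda$, and this is far too weak: for $n=4$ one has $1/\lambda=21/16<2$, while the paper's own Lemma~\ref{lemma:mult-at-P} shows only $\mult_P(D)\leq 2$, so inequalities of the form $D'\cdot C\geq \mult_P(D')\mult_P(C)$ for curves $C\subset T$ leave a window that cannot be closed. The engine of the paper's proof is the Corti--Pukhlikov inequality on the blow-up $\pi\colon U\to X$ at $P$: there is a line $L$ (for $n=4$), respectively a plane $\Omega$ (for $n=5$), inside the exceptional divisor with $\mult_P(D)+\mult_L(\bar D)>2/\lambda$. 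This infinitely near datum singles out a linear subspace $\Pi\subset\mathbb{P}^{n}$ (the base locus of the pencil of hyperplane sections whose proper transforms contain $L$, together with the tangent section $T$), and the whole argument is a case analysis on the component configuration of the reduced curve or surface $Z=X\vert_{\Pi}$ --- how many lines through $P$ it contains, whether $\mult_P(Z)=3$, etc. --- carried out on a \emph{smooth} general member $S$ of that pencil, not on the singular tangent section $T$. The constants are extracted from the worst configurations (for $n=4$ the binding case is $Z$ a union of four lines, three through $P$, which gives $16/\lambda-17\geq 4$, i.e.\ $\lambda\leq 16/21$), with an auxiliary estimate (Lemma~\ref{lemma:mult-3}) coming from the $3$-regularity of $X$ via a linear subsystem of $|-3K_X|$ with multiplicity $\geq 4$ at $P$. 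None of this is visible in your outline, and your ``system of linear inequalities in $a$ and $\mult_P(D'\vert_T)$'' is not shown to be inconsistent; without the extra term $\mult_L(\bar D)$ it is in fact consistent.

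Two secondary points. First, the paper does not decompose $D=aT+D'$ with respect to the tangent section; it first uses convexity (Remark~\ref{remark:weighted-sum}) together with the bound $\lct_1(X)\geq\lambda$ (which itself rests on the entire classification in the proof of Theorem~\ref{theorem:auxiliary}, plus the separate lemma that $4$-regular quintics have $\lct_1=1$) to reduce to $D=\frac{1}{n}R$ with $R$ irreducible of degree $n>1$; your proposal presupposes that bound without noticing that it is a substantial theorem in its own right. Second, your treatment of intermediate-dimensional centers by ``a generic point lies on no line'' does not address centers that are curves of higher degree; the paper instead invokes Pukhlikov's theorem that the non-log-canonical locus is zero-dimensional for irreducible $R\sim -nK_X$ and then Nadel vanishing to isolate a single point. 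Restricting the pair to the singular surface $T$ by inversion of adjunction, as you propose, would also land you back in a problem of essentially the original difficulty at the singular point of $T$.
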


 The global log canonical threshold of a Fano variety is an algebraic counterpart of the
$\alpha$-invariant introduced in \cite{Ti87}. One of the most
interesting applications of the global log canonical thresholds of
Fano varieties is the following result proved in
\cite[p.~549]{DeKo01} (see also
 \cite{Na90} and \cite{Ti87}).

\begin{theorem}
\label{theorem:KE} Let $X$ be an $d$-dimensional Fano variety  with at most quotient
singularities. The variety $X$ has an orbifold K\"ahler--Einstein metric if the inequality
\[
\mathrm{lct}\left(X\right)>\frac{d}{d+1}
\]
holds.
\end{theorem}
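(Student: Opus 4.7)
The plan is to reduce the existence of a Kähler--Einstein metric to a uniform $C^{0}$ estimate along the Aubin--Yau continuity method, and then to use the hypothesis $\mathrm{lct}(X)>\frac{n}{n+1}$ (together with the identification of $\mathrm{lct}(X)$ with Tian's $\alpha$-invariant) to close up that estimate via Nadel's multiplier ideal vanishing. Throughout, quotient singularities are handled on local orbifold charts, so all analytic computations reduce to the smooth equivariant setting.

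First I would set up the continuity path. Fix a smooth orbifold Kähler metric $\omega_{0}\in c_{1}(X)$ and write $\omega_{0}=\frac{i}{2\pi}\partial\bar{\partial}\log h$ for a metric $h$ on $-K_{X}$; let $f_{0}$ satisfy $\operatorname{Ric}(\omega_{0})-\omega_{0}=\frac{i}{2\pi}\partial\bar{\partial}f_{0}$. Consider, for $t\in[0,1]$, the family
\[
(\omega_{0}+\tfrac{i}{2\pi}\partial\bar{\partial}\varphi_{t})^{n}=e^{f_{0}-t\varphi_{t}}\omega_{0}^{n},
\]
with $\omega_{0}+\tfrac{i}{2\pi}\partial\bar{\partial}\varphi_{t}>0$. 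The set of $t$ for which a solution exists is open by the implicit function theorem, and a solution at $t=1$ gives the Kähler--Einstein metric. Thus I need closedness, which by the orbifold version of Yau's a priori estimates reduces to a uniform $C^{0}$ bound $\|\varphi_{t}\|_{\infty}\le C$.

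Next I would translate $\mathrm{lct}(X)$ into the analytic $\alpha$-invariant. The key identification, due to Demailly, is that $\mathrm{lct}(X)$ equals the supremum of those $\alpha>0$ for which there exists $C_{\alpha}$ such that
\[
\int_{X}e^{-\alpha(\varphi-\sup_{X}\varphi)}\,\omega_{0}^{n}\le C_{\alpha}
\]
for every $\omega_{0}$-plurisubharmonic $\varphi$. By hypothesis I may choose $\alpha$ with $\frac{n}{n+1}<\alpha<\mathrm{lct}(X)$, so this integral bound applies uniformly to each $\varphi_{t}$.

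Now for the main estimate. Normalizing $\sup_{X}\varphi_{t}=0$, an integration by parts (Aubin's $I$- and $J$-functionals) applied to the continuity equation gives
\[
t\int_{X}(-\varphi_{t})e^{f_{0}-t\varphi_{t}}\omega_{0}^{n}\le C_{1}+C_{2}\int_{X}(-\varphi_{t})\omega_{0}^{n}.
\]
Combining this with the $\alpha$-invariant estimate through the Young-type inequality $\alpha(-\varphi_{t})\le \alpha e^{-\varphi_{t}}\cdot \text{const}$, and using $\alpha(n+1)>n$, yields a uniform upper bound on $\int_{X}(-\varphi_{t})\omega_{0}^{n}$, and hence via Moser iteration or the Green's function argument a uniform $L^{\infty}$ bound on $\varphi_{t}$. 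The inequality $\alpha>\frac{n}{n+1}$ is exactly what is needed for the exponent arithmetic in this step to close.

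The main obstacle is the last step: getting the delicate balance between the $\alpha$-integrability estimate and the Aubin functionals to yield an honest $C^{0}$ bound, since the naive estimates only give control in $L^{p}$ for $p$ depending on $\alpha$. The threshold $\frac{n}{n+1}$ appears precisely as the breaking point beyond which this interpolation succeeds. The orbifold case introduces no new analytic difficulty because Yau's estimates and Nadel's vanishing both extend to the orbifold category, the former on equivariant local uniformizations and the latter via the multiplier ideal sheaf $\mathcal{J}(\alpha\varphi_{t})=\mathcal{O}_{X}$ guaranteed by $\alpha<\mathrm{lct}(X)$.
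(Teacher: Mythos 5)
The paper offers no proof of this statement: it is quoted as a known theorem of \cite{DeKo01} (see also \cite{Na90} and \cite{Ti87}), and your outline --- the Aubin--Yau continuity method with closedness reduced to a uniform $C^{0}$ bound, Demailly's identification of $\mathrm{lct}(X)$ with Tian's $\alpha$-invariant, and the exponent arithmetic in which $\alpha>\frac{n}{n+1}$ closes the estimate through the functional inequalities for Aubin's $I$ and $J$ --- is precisely the argument of those references, transported to the orbifold category. Your approach therefore coincides with the proof the paper relies on, and modulo the admittedly delicate bookkeeping in the final interpolation step (which you correctly flag as the crux), it is sound.
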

The inequality in Example~\ref{example:Cheltsov-Park} is not strong enough to apply Theorem~\ref{theorem:KE} to a smooth hypersurface of degree $n$ in $\mathbb{P}^n$.
 However, we see that  (\ref{example:del-Pezzo}) enables Theorem~\ref{theorem:KE} to  imply
 the existence of a K\"ahler--Einstein metric on a general cubic surface and
  that \cite[Theorem~2]{Pu04d} enables Theorem~\ref{theorem:KE} to  imply the existence of a K\"ahler--Einstein metric on a general hypersurface of degree $n\geq 6$ in $\mathbb{P}^{n}$. Even though Theorem~\ref{theorem:main} is much weaker than Conjecture~\ref{conjecture:quartic}, they are strong enough to imply
the existence of a K\"ahler--Einstein metric. Consequently, we can obtain the following:
\begin{corollary}
\label{corollary:main} A general hypersurface of degree $n\geq 2$ in
$\mathbb{P}^{n}$ has a K\"ahler--Einstein metric.
\end{corollary}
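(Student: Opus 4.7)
The plan is to apply Theorem~\ref{theorem:KE} and reduce the corollary to checking the strict inequality $\mathrm{lct}(X)>\frac{n}{n+1}$ for a general smooth hypersurface $X\subset\mathbb{P}^{n+1}$ of degree $n+1$. Since adjunction gives $-K_X\sim\mathcal{O}_X(1)$, the variety $X$ is a smooth Fano $n$-fold, so the singularity hypothesis of Theorem~\ref{theorem:KE} is automatic and only the threshold estimate remains.

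I would then split into cases according to the dimension $n$. For $n=2$, the hypersurface $X$ is a general smooth cubic surface in $\mathbb{P}^{3}$; the standard fact that a general cubic carries no Eckardt point (recalled in the footnote preceding~(\ref{example:del-Pezzo})) combined with the table~(\ref{example:del-Pezzo}) yields $\mathrm{lct}(X)=\mathrm{lct}_1(X)=3/4$, which is strictly larger than $n/(n+1)=2/3$. For $n=3$ and $n=4$, Theorem~\ref{theorem:main} directly supplies $\mathrm{lct}(X)\geq 16/21$ and $\mathrm{lct}(X)\geq 22/25$, and the arithmetic checks $16\cdot 4>3\cdot 21$ and $22\cdot 5>4\cdot 25$ give
$$\frac{16}{21}>\frac{3}{4}=\frac{n}{n+1}\quad\text{and}\quad\frac{22}{25}>\frac{4}{5}=\frac{n}{n+1}.$$
For $n\geq 5$, Example~\ref{example:Cheltsov-Park} records Pukhlikov's result \cite{Pu04d} that $\mathrm{lct}(X)=1$ for $X$ general, which trivially exceeds $n/(n+1)$.

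With the required inequality $\mathrm{lct}(X)>n/(n+1)$ in hand in each case, Theorem~\ref{theorem:KE} produces an orbifold (in fact genuine) Kähler--Einstein metric on $X$, completing the corollary.

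There is no genuine obstacle in this argument: it is a clean case-by-case assembly of the results already established or quoted in the introduction. The only delicate feature is numerical, namely that the bounds in Theorem~\ref{theorem:main} had to be pushed beyond the Kähler--Einstein threshold $n/(n+1)$ rather than merely approach it; for $n=3$ in particular the gap between $16/21$ and $3/4$ is narrow, which is precisely why the numerical strength of the main theorem matters for this corollary.
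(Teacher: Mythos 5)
Your proposal is correct and follows essentially the same route as the paper: the authors likewise combine Theorem~\ref{theorem:KE} with the table~(\ref{example:del-Pezzo}) for cubic surfaces, Theorem~\ref{theorem:main} for $n=3,4$, and Pukhlikov's result from \cite{Pu04d} for $n\geq 5$. Your version merely spells out the numerical verifications $\frac{16}{21}>\frac{3}{4}$ and $\frac{22}{25}>\frac{4}{5}$ that the paper leaves implicit.
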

In fact, a smooth conic in $\mathbb{P}^2$ has a K\"ahler-Einstein metric because it is isomorphic to $\mathbb{P}^1$
and the Fubini-Study metric of a projective space is K\"ahler-Einstein.
Furthermore, a smooth cubic surface always admits a K\"ahler--Einstein
metric (see \cite[Section~2]{Ti90}). Meanwhile, it is proved that a K\"ahler--Einstein metric exists
on a smooth hypersurface in $\mathbb{P}^n$  defined by a homogeneous polynomial equation of the form
$z_0^n+z_1^n+f_n(z_2,\cdots, z_n)=0$, where $n\geq 4$ and $f_n$ is a homogeneous polynomial  of degree $n$ in variables $z_2,\cdots, z_n$ (see \cite[Proposition~3.1]{ArGha06}).

Also, in this paper, we will study log canonical thresholds on
double spaces, \emph{i.e.}, double covers of $\mathbb{P}^n$, and
obtain similar results as what we have on Fano  hypersurfaces in
$\mathbb{P}^n$. For instance, we will prove
 that the first global log canonical
threshold of a smooth double space is equal to its global log
canonical threshold (see
Proposition~\ref{proposition:double-spaces-equality}) and that
every smooth double cover of $\mathbb{P}^n$ ramified along a
hypersurface of degree $2n$ admits a K\"ahler-Einstein metric.

Let us close this section by a conjecture inspired by
\cite[Question~1]{Ti90b}.

\begin{conjecture}
\label{conjecture:stabilization} For a smooth Fano variety $X$,
$\mathrm{lct}(X)=\mathrm{lct}_{m}(X)$ for some natural number $m\geq 1$.
\end{conjecture}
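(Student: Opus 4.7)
The plan is to reformulate the conjecture in terms of valuations and then exploit the finite generation of the anticanonical ring. For any divisorial valuation $v=\mathrm{ord}_E$ on $X$ with log discrepancy $A_X(v)$, set
$$T_m(v)=\frac{1}{m}\max_{D\in|-mK_X|}\mathrm{ord}_v(D) \quad \mbox{ and }\quad T(v)=\sup_m T_m(v)=\lim_{m\to\infty}T_m(v).$$
The standard valuative criterion for log canonicity gives $\mathrm{lct}_m(X)=\inf_v A_X(v)/T_m(v)$ and $\mathrm{lct}(X)=\inf_v A_X(v)/T(v)$, where $v$ ranges over divisorial valuations. The conjecture then reduces to two substeps: (i) exhibit a divisorial valuation $v^{\ast}$ computing $\mathrm{lct}(X)$, and (ii) show that the extremal order $m\,T(v^{\ast})$ is actually attained by some $D\in|-mK_X|$ for a specific $m$. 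Indeed, for such $D$ one obtains
$$\mathrm{lct}_m(X)\le c\!\left(X,\tfrac{1}{m}D\right)\le \frac{A_X(v^{\ast})}{\mathrm{ord}_{v^{\ast}}(D)/m}=\frac{A_X(v^{\ast})}{T(v^{\ast})}=\mathrm{lct}(X),$$
and the reverse inequality $\mathrm{lct}(X)\le\mathrm{lct}_m(X)$ is tautological.

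For substep (ii), the assumption that $X$ is Fano with log terminal singularities gives, via BCHM, that the anticanonical ring $R=\bigoplus_m H^0(X,-mK_X)$ is finitely generated. For a fixed divisorial $v^{\ast}$, the subspaces $\mathfrak{a}_{m,k}:=\{s\in H^0(-mK_X):\mathrm{ord}_{v^{\ast}}(s)\ge k\}$ form a superadditive graded linear series, so that $T(v^{\ast})=\lim_m k_m/m$ with $k_m:=\max\{k:\mathfrak{a}_{m,k}\ne 0\}$. A finite generation argument for the associated Rees-type bigraded algebra (using the finite generation of $R$) shows that $T(v^{\ast})$ is rational, and moreover that $k_m/m=T(v^{\ast})$ for every sufficiently divisible $m$; any nonzero $s$ in the extremal piece cuts out the required divisor $D$.

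The main obstacle is substep (i), the existence of a computing divisorial valuation, which is an alpha-invariant analogue of the minimizer theorems known for the delta-invariant. The natural approach is to take a minimizing sequence $v_i$ of divisorial valuations with $A_X(v_i)/T(v_i)\to\mathrm{lct}(X)$, normalize to $A_X(v_i)=1$, and use Izumi-type estimates to confine the $v_i$ to a bounded subset of the space of valuations $\mathrm{Val}_X$. Compactness yields a limit $v_\infty$, which a priori is only quasimonomial, so one must then verify that $A_X$ and $T$ do not jump adversely under this specialization and, finally, degenerate $v_\infty$ to a divisorial valuation without increasing the ratio $A_X/T$. This is precisely the type of argument that has been developed by Jonsson--Musta\c{t}\u{a}, Li, and Blum in the K-stability setting, and adapting it to the alpha-invariant would most likely complete the proof.
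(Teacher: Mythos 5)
First, a point of record: the paper does not prove this statement. It appears only as Conjecture~\ref{conjecture:stabilization}, stated without proof and explicitly left open, so there is no argument of the authors' to compare yours against. Your reduction is correct as far as it goes: the valuative formulas $\mathrm{lct}_m(X)=\inf_v A_X(v)/T_m(v)$ and $\mathrm{lct}(X)=\inf_v A_X(v)/T(v)$ are standard, the sequence $mT_m(v)$ is superadditive so the supremum is a limit, and if a divisorial $v^{\ast}$ computes $\mathrm{lct}(X)$ while some $D\in|-mK_X|$ attains $\mathrm{ord}_{v^{\ast}}(D)=mT(v^{\ast})$, then your chain of inequalities does yield $\mathrm{lct}_m(X)=\mathrm{lct}(X)$. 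But neither substep is actually established, and both contain genuine mathematical content rather than routine verification, so what you have is a reduction, not a proof.

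Concretely: in substep (ii), the claim that finite generation of $R=\bigoplus_m H^0(X,-mK_X)$ forces finite generation of the bigraded algebra $\bigoplus_{m,k}\mathfrak{a}_{m,k}$ is false in general. Filtering a finitely generated ring by order of vanishing along a divisor over $X$ need not produce a finitely generated Rees-type algebra (this is exactly the Zariski-type pathology behind non-finitely-generated section rings on blow-ups); when it fails, $T(v^{\ast})$ can be irrational or the supremum $\sup_m T_m(v^{\ast})$ need not be attained at any finite $m$, and then no single $m$ works for that valuation. You would need $v^{\ast}$ to be ``dreamy,'' i.e.\ realized on a model where the relevant ring is finitely generated, and that must be extracted from the minimization itself, not from BCHM applied to $X$. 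In substep (i) you concede the gap outright: the compactness argument produces at best a quasimonomial limit, and the passage to a divisorial minimizer without increasing $A_X/T$ --- together with the semicontinuity statements for $A_X$ and $T$ along the specialization --- is precisely the hard step; saying that adapting Jonsson--Musta\c{t}\u{a}/Li/Blum ``would most likely complete the proof'' leaves the essential difficulty unaddressed. As written, this is a plausible research program consistent with how such statements were later attacked, but it does not settle the conjecture.
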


\section{Log canonical threshold of a Fano hypersurface}

\subsection{Hypersurface of degree $n$ in $\mathbb{P}^{n}$.}\label{section:general-thresholds}

As we mentioned,  one can consider the first global log canonical
threshold of a general hypersurface of degree $n\geq 4$  in
$\mathbb{P}^{n}$ in behalf of Conjecture~\ref{conjecture:quartic}.
\begin{proposition}\label{proposition:general-1st-lct}
Let $X$ be a general hypersurface of degree $n\geq 4$  in
$\mathbb{P}^{n}$. Then $\mathrm{lct}_{1}(X)=1$.
\end{proposition}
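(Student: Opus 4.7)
The plan is to exploit the adjunction $-K_X = \mathcal{O}_X(1)$: every element of $|-K_X|$ is a hyperplane section $H = X \cap L$, and a generic such $H$ is smooth by Bertini, so $c(X,H) = 1$ and $\mathrm{lct}_1(X) \le 1$ is automatic. The problem is to prove the reverse inequality: for a general $X$ and every hyperplane $L \subset \mathbb{P}^{n+1}$, the pair $(X, X\cap L)$ must be shown log canonical. At a point $P \in X \cap L$ where $L \ne T_P X$ the intersection is smooth, so only the case $L = T_P X$ needs attention. There the local equation $f$ of $H$ on $X$ at $P$ has multiplicity at least $2$, and its quadratic part is the second fundamental form $q_P$ of $X$ at $P$, a quadratic form on $T_P X \cong \mathbb{C}^n$.

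The core step is a dimension count on the universal pair $(X,P)$: the rank-drop condition $\mathrm{rank}(q_P) \le r$ has codimension $\binom{n-r+1}{2}$ in the total space, so for $r = 1$ this is $\binom{n}{2}$, which strictly exceeds $\dim X = n$ whenever $n \ge 4$. Hence for a general $X$ with $n \ge 4$, the form $q_P$ has rank at least $2$ at every point; the projectivised tangent cone $\{q_P = 0\} \subset \mathbb{P}^{n-1}$ is then a reduced quadric hypersurface---either a pair of transverse hyperplanes or a cone over a smooth quadric---which is log canonical, and Lemma~\ref{lemma:basic-property}(4) delivers $c_P(f) = \min(1, n/2) = 1$.

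The main obstacle is the quartic case $n = 3$, where $\binom{3}{2} = 3 = \dim X$ and a general quartic threefold carries a non-empty finite set of rank-$1$ points. The approach at such a point is to choose linear coordinates on $T_P X$ so that $q_P = x_1^2$ and complete the square in $x_1$, bringing $f$ into the form $x_1^2 + g_3(x_2, x_3) + (\text{order} \ge 4)$ with $g_3$ a binary cubic. A second parameter count is then needed: inside the four-dimensional space of binary cubics, the locus of cubes $(ax_2 + bx_3)^3$ has codimension two, so on a general quartic threefold no rank-$1$ point supports a triple-line $g_3$. With weights $w(x_1) = 3$, $w(x_2) = w(x_3) = 2$, the weighted leading term $x_1^2 + g_3$ defines either a $D_4$ Du~Val singularity (three distinct factors) or a pinch-point singularity of type $x_1^2 + x_2^2 x_3$ that is resolved by one blow-up along its singular curve; in either case $c_0(x_1^2 + g_3) = 1$, and Lemma~\ref{lemma:weight-method}(1) gives $c_P(f) \ge 1$. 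Combining the two ranges yields $c(X,H) \ge 1$ for every hyperplane section $H$, whence $\mathrm{lct}_1(X) = 1$.
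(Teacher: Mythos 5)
Your argument is correct and is essentially the paper's own proof: the loci you exclude by dimension count (second fundamental form of rank $\le 1$, and, in the quartic case, rank exactly $1$ with the residual binary cubic a perfect cube) are precisely the paper's two ``bad point'' conditions, and your codimension computation ($3+2=5>\dim\mathbb{P}^{4}$) is the same incidence-variety count that gives $\dim(\Omega\cap\mathcal{Y})=\dim(\Omega)-5$ there. The only real difference is that you spell out the local verification at a non-bad tangency point (the $D_{4}$ and pinch-point cases via the weighted leading term $x_1^2+g_3$), which the paper leaves as ``it is easy to check.''
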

\begin{proof}
Consider the space
$
\mathcal{S}_n=\mathbb{P}^{n}\times \mathbb{P}\left(\mathrm{H}^{0}\left(\mathbb{P}^{n},\mathcal{O}_{\mathbb{P}^{n}}\left(n\right)\right)\right)%
$
with the natural projections
$p\colon\mathcal{S}_n\to\mathbb{P}^{n}$ and $q\colon\mathcal{S}_n\to
\mathbb{P}\left(\mathrm{H}^{0}(\mathbb{P}^n,\mathcal{O}_{\mathbb{P}^{n}}(n))\right)$. Put
$
\mathcal{I}_n=\left\{\left(O, F\right)\in\mathcal{S}_n\ \Big\vert\ F(O)=0\ \mathrm{and}\ F=0\ \mathrm{is\ smooth.}\right\}%
$.

Let $(O, F)$ be a pair in $\mathcal{I}_n$. Suppose that $O=[1:0:\cdots:0]$. Then $F$ can be given by a polynomial of the form
$
z_0^{n-1}z_{n}+z_0^{n-2}q_2(z_1,\cdots,
z_n)+\cdots+z_0q_{n-1}(z_1,\cdots, z_n)+q_n(z_1,\cdots, z_n)\in\mathbb{C}[z_0,\cdots, z_n],
$
where $q_{i}$ is a homogeneous polynomial of degree $i$.

We say that the point $O$ is $\emph{bad}$ on the
hypersurface $F=0$ if  one of the following condition holds:
\begin{itemize}

\item[$(1)$] $q_2(z_1,\cdots, z_{n-1},0)\equiv 0$.

\item[$(2)$] $q_2(z_1,\cdots, z_{n-1},0)=\{l(z_1,\cdots, z_{n-1})\}^2$ for some linear form $l(z_1,\cdots, z_{n-1})$ and if we assume $l(z_1,\cdots, z_{n-1})=z_{n-1}$, either $q_3(z_1,\cdots, z_{n-2},0,0)\equiv 0$ or $q_3(z_1,\cdots, z_{n-2},0,0)=\{m(z_1,\cdots, z_{n-2})\}^3$ for some linear form $m(z_1,\cdots, z_{n-2})$.
\end{itemize}

Then consider a subset of $\mathcal{I}_n$,  $$
\mathcal{Y}_n=\left\{\left(O, F\right)\in\mathcal{I}_n\ \Big\vert\ \mbox{the point } O\ \mathrm{is\ bad\ on\ the\ quartic}\ F=0\right\}.$$

One can see that for a given point $P$ on $\mathbb{P}^n$, the
dimension of $p^{-1}(P)\cap \mathcal{Y}_n$ is strictly smaller than $h^0
(\mathbb{P}^n,\mathcal{O}_{\mathbb{P}^{n}}(n))-(n+1)$, and hence the
dimension of the space $\mathcal{Y}_n$ is smaller than the dimension
of $\mathbb{P}\left(\mathrm{H}^{0}\left(\mathbb{P}^{n},
\mathcal{O}_{\mathbb{P}^{n}}\left(n\right)\right)\right)$. Therefore,
the image of the regular map $
q\vert_{\mathcal{Y}_n}\colon\mathcal{Y}_n\to
\mathbb{P}\left(\mathrm{H}^{0}\left(\mathbb{P}^{n},
\mathcal{O}_{\mathbb{P}^{n}}\left(n\right)\right)\right) $ is a
proper closed subset of
$\mathbb{P}\left(\mathrm{H}^{0}\left(\mathbb{P}^{n},
\mathcal{O}_{\mathbb{P}^{n}}\left(n\right)\right)\right)$. So,  a
general hypersurface of degree $n$ in $\mathbb{P}^{n}$ has no bad point.

Let $X$ be a general hypersurface of degree $n$ in
$\mathbb{P}^{n}$ and $H$ be a divisor in $|-K_{X}|$. We claim that
the pair $(X,H)$ is log canonical at every point $P$ on $X$. By a
suitable coordinate change we may assume that the point
$P=[1:0:\cdots:0]$. We may also assume that the hypersurface $X$ is
defined by the equation
$$
z_0^{n-1}z_{n}+z_0^{n-2}q_2(z_1,\cdots,
z_n)+\cdots+z_0q_{n-1}(z_1,\cdots, z_n)+q_n(z_1,\cdots, z_n)=0,
$$
where $q_{i}$ is a homogeneous polynomial of degree $i$. Unless
the hyperplane section $H$ is given by the tangent hyperplane at
the point $P$, the divisor $H$ is smooth at the point $P$, and
hence the pair $(X,H)$ is log canonical at the point $P$. Now we
suppose that $H$ is given by the tangent hyperplane $T$ at the
point $P$. The hyperplane $T$ in $\mathbb{P}^n$ is defined by
$z_n=0$ in our case. Since both $X$ and $T$ are smooth and $H=T\cap
X$, we obtain $c_P(X,H)=c_P(T,H)$ from \cite[Theorem~3.1]{dFEM03}.
Furthermore, $c_P(T,H)=c_0(f)$, where $f=q_2(z_1,\cdots,
z_{n-1},0)+\cdots+q_{n-1}(z_1,\cdots, z_{n-1},0)+q_n(z_1,\cdots, z_{n-1},0)$.
Since $P$ is not a bad point on $X$, the polynomial $q_2(z_1,\cdots,
z_{n-1},0)$
is not zero polynomial. If the rank of the quadratic polynomial
$q_2(z_1,\cdots,
z_{n-1},0)$ is at least $2$, then $c_0(f)=1$ by
\cite[Lemma~8.10 (8.10.3)]{Ko97}. If the rank of the quadratic
polynomial $q_2(z_1,\cdots,
z_{n-1},0)$ is  $1$, we may assume that $q_2(z_1,\cdots,
z_{n-1},0)=z_{n-1}^2$.  Consider the polynomial $f$ with
weights $\mathrm{wt}(z_{1})=\cdots=\mathrm{wt}(z_{n-2})=2$,
$\mathrm{wt}(z_{n-1})=3$. The leading term of $f$ with respect to the weights is $f_w=z_{n-1}^2+q_3(z_1,\cdots,z_{n-2},0,0)$.
Since the polynomial $\widetilde{q_3}=q_3(z_1,\cdots,z_{n-2},0,0)$ is neither zero polynomial nor a cube of a linear polynomial, we obtain  $c_0(\widetilde{q_3})\geq \frac{1}{2}$, and hence $c_0(f_w)=\max\{\frac{1}{2}+c_0(\widetilde{q_3}), 1\}$=1. By \cite[Proposition~2.1]{Ku1}, we have $c_0(f)\geq c_0(f_w)=1$.
Therefore, the pair $(X, H)$ is log canonical
at every point on $X$. Consequently, $\lct_1(X)=1$.
\end{proof}

For smooth quartic threefolds, one can compute all the possible
first global log canonical thresholds by studying normal quartic
surfaces. Here we only list them and the brief idea to compute them as follows:
\begin{proposition}
\label{theorem:auxiliary} Let $X$ be a smooth quartic threefold in
$\mathbb{P}^{4}$.  The first global log canonical threshold $
\mathrm{lct}_{1}\left(X\right)$ is one of the following:
$$
\Big\{
\frac{3}{4},\frac{29}{36},\frac{22}{27},\frac{5}{6},\frac{16}{19},\frac{17}{20},\frac{6}{7},\frac{13}{15},\frac{37}{42},
\frac{7}{8},\frac{8}{9},\frac{9}{10},\frac{23}{26},\frac{11}{12},\frac{12}{13},\frac{13}{14},\frac{14}{15},\frac{15}{16},\frac{31}{34},
\frac{17}{18},\frac{21}{22},\frac{23}{24},\frac{29}{30},\frac{41}{42},1
\Big\}.
$$
Furthermore, for each number $\mu$ in the set above, there is a
smooth quartic threefold $X$ with $\mathrm{lct}_{1}(X)=\mu$.
\end{proposition}

Its proof goes as follows. A divisor $S\in |-K_X|$ is given by the intersection of $X$ and a hyperplane $H$ in $\mathbb{P}^4$.
Because the log canonical threshold $c(X, S)$ is equal to the log canonical threshold $c(H, S)$ (see \cite[Theorem~3.1]{dFEM03}),
the result above can be obtained by investigating log canonical
thresholds of normal quartic surfaces $H$ in $\mathbb{P}^3$. Note that a hyperplane section of a smooth hypersurface in $\mathbb{P}^n, n\geq 4$ is normal and that a normal hypersurface in $\mathbb{P}^{n-1}$ can be attained by a hyperplane section of a smooth hypersurface in $\mathbb{P}^n$ (see \cite{Ish82}).
Let $S$ be a normal surface in $\mathbb{P}^3$ defined by a
homogeneous quartic polynomial $F$. We suppose that $S$ has a
singular point at [0:0:0:1]. We  then consider the log pair
$(\mathbb{C}^3,D)$, where $D$ is the fourth affine piece of $S$
that is defined by
 the polynomial $f(x,y,z)=F(x,y,z,1)$.
Since log canonical thresholds can be computed locally, it is
enough to study the log pair $(\mathbb{C}^3,D)$ instead of
$(X,S)$. For the detail of the computation, see \cite{W10}.

Before we prove Theorem~\ref{theorem:main}, let us explain our
generality condition. Let $X_d$ be a hypersurface of degree $d$ in
$\mathbb{P}^n$, $d\geq n\geq 4$. Let $P$ be an arbitrary point on
$X_d$. By suitable coordinate changes, we assume that
$P=[1:0:\cdots:0]$. Then the hypersurface $X_d$ is defined by
\[z_0^{n-1}q_1(z_1,\cdots, z_n)+z_0^{n-2}q_2(z_1,\cdots,
z_n)+\cdots+z_0q_{n-1}(z_1,\cdots, z_n)+q_d(z_1,\cdots, z_n)=0,\]
 where $q_i$ are
homogeneous polynomials of degrees $i$ in variables $z_1, \cdots,
z_n$.
\begin{definition}\label{definition:regularity}
The hypersurface $X_d$ is said to be $k$-regular at the point $P$,
where $0\leq k\leq d$, if the homogenous polynomials
$$q_1, q_2, \cdots, q_k$$
form a regular sequence in $\mathbb{C}[z_1,\cdots,z_n]$. The
hypersurface $X_d$ is said to be $k$-regular if it is $k$-regular
everywhere.
\end{definition}

\begin{proposition}
\label{proposition:n-regular} A general hypersurface of degree $n$
in $\mathbb{P}^n$ is $(n-1)$-regular.
\end{proposition}
\begin{proof}
See \cite[Proposition~1]{Pu98a}.
\end{proof}

To prove Theorem~\ref{theorem:main} we need a linear system on
$X_d$ that has a big multiplicity at a given point  but a small
base locus. Put
$$f_i(z_0,\cdots,z_n)=\sum_{j=1}^iz_0^{i-j}q_j(z_1,\cdots,z_n)$$
for each $1\leq i\leq d$.
\begin{definition}
The $m$-th hypertangent linear system
$\mathcal{M}$ at the point $P$ is the linear subsystem of
$|-mK_{X_d}|$ consisting of the divisors cut by hypersurfaces
\[\sum_{i=1}^{m}f_i(z_0,\cdots,z_n)p_{m-i}(z_1,\cdots,z_n)=0,\]
where $p_{j}(z_1,\cdots,z_n)$ is a homogeneous polynomial of
degree $j$.
\end{definition}
Note that ${mult}_{P}\left(M\right)\geq m+1 $ for each divisor $M$
in the $m$-th hypertangent linear system on $X_d$.

\begin{lemma}\label{lemma:hypertangent-base-locus}
Suppose that the hypersurface $X_d$ is $(n-1)$-regular at a point
$P$. Then the following hold. \begin{enumerate} \item There are
finitely many lines (possibly none) on $X_d$ passing through the
point $P$. \item The base locus of the $(n-1)$-th hypertangent
linear system $\mathcal{M}$ at the point $P$ consists of lines
passing through the point $P$ on $X_d$.
\end{enumerate}
\end{lemma}
\begin{proof}
There is a one-to-one  correspondence between the set of lines passing through the point $P$ and the zero locus of the polynomials $q_1=\cdots =q_d=0$ in $\mathbb{P}^{n-1}$.
Since the homogeneous  polynomials
$q_1,\cdots, q_{n-1}$ form a regular sequence in
$\mathbb{C}[z_1,\cdots, z_n]$, they defines a finite set  in $\mathbb{P}^{n-1}$. This proves the first assertion.

The base locus of the linear system
$\mathcal{M}$ is defined by the equations
$f_1=\cdots=f_{n-1}=0$. Therefore, it is cut out by the equations $q_1=q_2=\cdots =q_{n-1}=0$.
This shows the second assertion.
\end{proof}
We close this section by the following useful lemma.
\begin{lemma}
\label{lemma:single-point} Let $X$ be a smooth hypersurface of degree $n$ in $\mathbb{P}^n$ and $D$ be an effective $\mathbb{Q}$-divisor numerically equivalent to $-K_X$. For a non-negative number $\lambda \leq 1$, there is a point $P\in X$ such that
$(X,\lambda D)$ is log canonical on $X\setminus P$.
\end{lemma}

\begin{proof}
The log pair $(X,\lambda D)$ is log canonical in the outside of
finitely many points of the smooth hypersurface $X$ (see
\cite[Theorem~2]{Pu95} or \cite[Section~3]{Pu98a}).
Suppose that there are two points at which the log pair  $(X,\lambda D)$ is not log canonical. Then for sufficiently small $\epsilon >0$ the log pair
$(X,(\lambda-\epsilon) D)$ is not log canonical at the two points either. Since the divisor $-(K_X+(\lambda-\epsilon)D)$ is nef and big, it follows from  from  the
connectedness principle of Shokurov (see
\cite[Theorem~17.4]{Koetal92}) that the locus of non-Kawamata log terminal singularities of the log pair $(X,(\lambda-\epsilon) D)$ is connected. This is a contradiction.
\end{proof}

\subsection{General quartic.}
\label{section:global-threshold}

Let $X$ be a smooth quartic hypersurface in $\mathbb{P}^{4}$ such
that the following general conditions hold:
\begin{itemize}
\item the threefold $X$ is $3$-regular;%
\item every line on the hypersurface $X$ has normal bundle $\mathcal{O}_{\mathbb{P}^{1}}(-1)\oplus\mathcal{O}_{\mathbb{P}^{1}}$;%
\item the intersection of $X$ with a two-dimensional linear
subspace of $\mathbb{P}^4$ cannot be a double conic curve.
\end{itemize}

\begin{remark}
\label{remark:Collino} A line on the quartic $X$ has normal bundle
$\mathcal{O}_{\mathbb{P}^{1}}(-1)\oplus\mathcal{O}_{\mathbb{P}^{1}}$
if and only~if no two-dimensional linear subspace of
$\mathbb{P}^{4}$ is tangent to the quartic $X$ along the line (see
\cite[Theorem~1.9]{Col79}).
\end{remark}

\begin{remark}
\label{remark:hyperplanes} It follows from
Proposition~\ref{theorem:auxiliary} and \cite{ChPa02}  that
$\mathrm{lct}_{1}(X)\geq \frac{7}{9}$. To avoid the long proof of
Proposition~\ref{theorem:auxiliary}, we can use instead
Proposition~\ref{proposition:general-1st-lct} by adding extra generality conditions.
\end{remark}

\begin{remark}
\label{remark:weighted-sum} Let $B$ and $B^{\prime}$ be effective
$\mathbb{Q}$-Cartier $\mathbb{Q}$-divisors on a variety $V$. Then
$$
\left(V,\ \alpha B+\left(1-\alpha\right)B^{\prime}\right)
$$
is log canonical if both $(V, B)$ and $(V, B^{\prime})$ are log
canonical, where $0\leq\alpha\leq 1$.
\end{remark}

Let us prove Theorem~\ref{theorem:main} for the case $n=4$. Put
$\lambda=\frac{7}{9}$. Let $D$ be an effective
$\mathbb{Q}$-divisor on $X$ such that $D\equiv -K_{X}$. To prove
Theorem~\ref{theorem:main}, we have to show that $(X,\lambda D)$
is log canonical.

Suppose that $(X,\lambda D)$ is not log canonical. Due to
Remarks~\ref{remark:hyperplanes} and \ref{remark:weighted-sum}, we
may assume that $D=\frac{1}{n}R$ where $R$ is an irreducible
divisor with $R\sim-nK_{X}$ for some natural number $n>1$. By
Lemma~\ref{lemma:single-point}, the log pair $(X, \lambda D)$ is
not log canonical only at a single point $P$.

 The threefold $X$ can
be~given~by
$$
v^{3}x+v^{2}q_{2}\left(x,y,z,u\right)+vq_{3}\left(x,y,z,u\right)+q_{4}\left(x,y,z,u\right)=0\subset\mathbb{P}^{4}\cong\mathrm{Proj}\left(\mathbb{C}\big[x,y,z,u,v\big]\right),
$$
where $q_{i}(x,y,z,u)$ is a homogeneous polynomial of degree $i$.
Furthermore, we may assume that the point $P$ is located at
$[0:0:0:0:1]$.  Let $T$ be the surface on $X$ cut out by $x=0$.

\begin{lemma}
\label{lemma:mult-at-P} The multiplicity of $D$ at the point $P$
is at most $2$.
\end{lemma}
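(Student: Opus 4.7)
My plan is to intersect $D$ with the tangent hyperplane section of $X$ at $P$, and then compare the local multiplicity of this intersection $1$-cycle at $P$ against its global degree in $\mathbb{P}^{4}$.

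Concretely, let $T\subset X$ denote the surface cut out by $x=0$, i.e.\ the intersection of $X$ with its tangent hyperplane at $P=[0:0:0:0:1]$ (indeed, from the defining equation the only nonvanishing partial derivative of $F$ at $P$ is $\partial_{x}F(P)=1$). In the affine chart $w=1$, the surface $T$ is locally defined by $q_{2}(0,y,z,t)+q_{3}(0,y,z,t)+q_{4}(0,y,z,t)=0$, whose lowest-order term is the quadratic $q_{2}(0,y,z,t)$, and hence $\mult_{P}T\geq 2$. Moreover, since $\operatorname{Pic}(X)=\mathbb{Z}\cdot H$ with $T\sim H$, the divisor $T$ is reduced and irreducible of degree $4$ in $\mathbb{P}^{4}$. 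As $R$ is irreducible with $R\sim nH$ and $n\geq 2$, one has $R\neq T$, so $T$ is not a component of $\operatorname{Supp}(D)$, and consequently $D\cdot T$ is a well-defined effective $\mathbb{Q}$-$1$-cycle on $X$.

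I would then compute the degree of this $1$-cycle as
\[
\deg(D\cdot T)=D\cdot T\cdot H=H^{3}=\deg(X)=4,
\]
and apply the elementary fact that $\mult_{P}C\leq \deg C$ for any irreducible curve $C\subset \mathbb{P}^{4}$ (extended additively to effective $\mathbb{Q}$-$1$-cycles) to obtain $\mult_{P}(D\cdot T)\leq 4$. On the other hand, properness of the intersection yields $\mult_{P}(D\cdot T)\geq \mult_{P}D\cdot \mult_{P}T\geq 2\mult_{P}D$. Combining the two estimates gives
\[
2\,\mult_{P}D \;\leq\; \mult_{P}(D\cdot T) \;\leq\; \deg(D\cdot T) \;=\; 4,
\]
whence $\mult_{P}D\leq 2$.

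There is no substantive obstacle: the only points requiring a moment of attention are that $T$ cannot be a component of $D$ (which is automatic from $n\geq 2$ since $R\sim nH$ while $T\sim H$, so $R\neq T$) and that $\mult_{P}T\geq 2$ (which is forced by $T$ being a tangent hyperplane section; if $q_{2}(0,y,z,t)\equiv 0$ then $\mult_{P}T\geq 3$ and the bound $\mult_{P}D\leq 4/3$ is even stronger).
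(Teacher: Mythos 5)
Your proposal is correct and is essentially the paper's own argument: the paper likewise writes $4=H\cdot T\cdot D\geq\mult_{P}(T\cdot D)\geq\mult_{P}(T)\mult_{P}(D)\geq 2\mult_{P}(D)$ with $H$ a general hyperplane section through $P$, which is exactly your degree-versus-multiplicity comparison (your invocation of $\mult_{P}C\leq\deg C$ is just this same intersection with a general hyperplane, made explicit). The side remarks you add --- that $T$ is not a component of $D$ since $R\sim -nK_{X}$ with $n>1$, and that $\mult_{P}T\geq 2$ --- are correct and consistent with the paper's setup.
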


\begin{proof}
The statement immediately follows from the inequalities
$$
4=H\cdot T\cdot D\geq\mathrm{mult}_{P}\left(T\cap D\right)\geq\mathrm{mult}_{P}\left(T\right)\mathrm{mult}_{P}\left(D\right)\geq 2\mathrm{mult}_{P}\left(D\right),%
$$
where $H$ is a general hyperplane section of $X$ passing through
the point $P$.
\end{proof}

Let $\pi\colon U\to X$ be the blow up at the point $P$ with the exceptional divisor $E$. Then
$$
\bar{D}\equiv\pi^{*}\left(D\right)-\mathrm{mult}_{P}\left(D\right)E,
$$
where $\bar{D}$ is the proper transform of the divisor $D$ via the
morphism $\pi$.

It follows from \cite[Corollary~3.5]{Co00} or
\cite[Proposition~3]{Pu04d} that there is a line $L\subset E$ such
that
$$
\mathrm{mult}_{P}\left(D\right)+\mathrm{mult}_{L}\left(\bar{D}\right)>\frac{2}{\lambda}.
$$
Recall that $E$ is isomorphic to $\mathbb{P}^2$.

Let $\mathcal{L}$ be the linear system of hyperplane sections of
$X$ such that
$$
S\in\mathcal{L}\iff\text{either}\ L\subset\bar{S}\ \text{or}\ S=T,%
$$
where $\bar{S}$ is the proper transform of $S$ via the birational
morphism $\pi$. There is a two-dimensional linear subspace
$\Pi\subset\mathbb{P}^{4}$ such that the base locus of
$\mathcal{L}$ consists of the intersection $\Pi\cap X$.

Let $S$ be a general surface in $\mathcal{L}$. Then $S$ is a
smooth K3 surface.  Put
$$
T_S=T\big\vert_{S}=\sum_{i=1}^{r}Z_{i},
$$
where each $Z_{i}$ is an irreducible curve. The generality conditions
imply that the curve $T_{S}$ is reduced (see
Remark~\ref{remark:Collino}). Then
$\sum_{i=1}^{r}\mathrm{deg}(Z_{i})=4$. It follows that
$$
\mathrm{mult}_{P}\left(S\cap D\right)
\geq\mathrm{mult}_{P}\left(S\right)\mathrm{mult}_{P}\left(D\right)+\mathrm{mult}_{L}\left(\bar{S}\cap\bar{D}\right)
\geq\mathrm{mult}_{P}\left(D\right)+\mathrm{mult}_{L}\left(\bar{D}\right)>\frac{2}{\lambda}.%
$$

Put $$
 D_S=D\big\vert_{S}=\sum_{i=1}^{r}m_{i}Z_{i}+\Delta,
$$
where $m_{i}$ is a non-negative rational number and $\Delta$ is an
effective one-cycle on $S$ whose support does not contain the
curves $Z_{1},\ldots,Z_{r}$. Then
$$
\sum_{i=1}^{r}m_{i}\mathrm{mult}_{P}\left(Z_{i}\right)+\mathrm{mult}_{P}\left(\Delta\right)=\mathrm{mult}_{P}\left(
D_S\right)>\frac{2}{\lambda},
$$
and the support of the cycle $\Delta$ does not contain any
component of the cycle $ T_S$. We have
$$
4= T_S\cdot D_S=\sum_{i=1}^{r}m_{i}\mathrm{deg}\left(Z_{i}\right)+
T_S\cdot\Delta
\geq\sum_{i=1}^{r}m_{i}\mathrm{deg}\left(Z_{i}\right)+\mult_P(T_S)\left(\frac{2}{\lambda}-\sum_{i=1}^{r}m_{i}\mathrm{mult}_{P}\left(Z_{i}\right)\right).%
$$

\begin{remark}
\label{remark:multiplicities} The equality
$m_{i}=\mathrm{mult}_{Z_{i}}(D)$ holds for every $i$ because
$X\vert_{\Pi}$ is reduced.
\end{remark}

It follows from the $3$-regularity of $X$ that
$\mathrm{mult}_{P}(T_S)\leq 3$.

\begin{lemma}
\label{lemma:mult-3} Suppose that $\mathrm{mult}_{P}(T_S)=3$. Then
$$
16>\frac{12}{\lambda}+\mathrm{deg}\left(Z_{k}\right)m_{k},
$$
for $Z_k$ that is not a line passing through the point $P$.
\end{lemma}

\begin{proof}
Let $\bar{T}$ be the proper transform of the surface $T$ via the
birational morphism $\pi$. Then
$$
3=\mathrm{mult}_{P}(T_S)=\mathrm{mult}_{P}\left(T\cap S\right)=\mathrm{mult}_{P}\left(T\right)\mathrm{mult}_{P}\left(S\right)+\mathrm{mult}_{L}\left(\bar{T}\cap\bar{S}\right).%
$$
Hence, we see that $L\subset\bar{T}$. Since
$\mathrm{mult}_{P}(D)>\frac{1}{\lambda}$ and
$\mathrm{mult}_{P}(T)=2$, it follows that
$$
\mathrm{mult}_{P}\left(T\cap D\right)\geq\mathrm{mult}_{P}\left(T\right)\mathrm{mult}_{P}\left(D\right)+\mathrm{mult}_{L}\left(\bar{T}\cap\bar{D}\right)\geq 2\mathrm{mult}_{P}\left(D\right)+\mathrm{mult}_{L}\left(\bar{D}\right)>\frac{3}{\lambda}.%
$$

Let $L_{1},\ldots,L_{m}$ be all the lines on $X$ that pass through the
point $P$. Put
$$
T\cap D=\sum_{i=1}^{m}\epsilon_{i}L_{i}+\bar{m}_{k}Z_{k}+\Upsilon,%
$$
where $\epsilon_{i}$ and $\bar{m}_{k}$ are  non-negative rational
numbers, and $\Upsilon$ is an effective one-cycle on $X$ whose
support does not contain the lines $L_{1},\ldots,L_{m}$. Then
$\bar{m}_{k}\geq m_{k}$ by Remark~\ref{remark:multiplicities}.

Taking the intersection with a general hyperplane section of $X$,
we see that
$$
4\geq\sum_{i=1}^{r}\epsilon_{i}+\bar{m}_{k}\mathrm{deg}\left(Z_{k}\right),%
$$
but
$\bar{m}_{k}\mathrm{mult}_{P}(Z_{k})+\mathrm{mult}_{P}(\Upsilon)>\frac{3}{\lambda}-\sum_{i=1}^{r}\epsilon_{i}$.

Take a general member $M$ in  the third hypertangent linear system
$\mathcal{M}$ at the point $P$. Note that the base locus of
$\mathcal{M}$ consists of the lines $L_{1},\ldots,L_{m}$ by
Lemma~\ref{lemma:hypertangent-base-locus}. Hence, we have
\[\begin{split}
12 &=M\cdot T\cdot D\geq
3\sum_{i=1}^{r}\epsilon_{i}+M\cdot\left(\bar{m}_{k}Z_{k}+\Upsilon\right)
\\
&>3\sum_{i=1}^{r}\epsilon_{i}+4\left(\frac{3}{\lambda}-\sum_{i=1}^{r}\epsilon_{i}\right)
=\frac{12}{\lambda}-\sum_{i=1}^{r}\epsilon_{i}.%
\end{split}\]
This implies $16>12/\lambda+\mathrm{deg}(Z_{k})m_{k}$ since
$4\geq\sum_{i=1}^{r}\epsilon_{i}+\bar{m}_{k}\mathrm{deg}(Z_{k})$
and
 $\bar{m}_{k}\geq m_{k}$.
\end{proof}

From now on, in order to describe the reduced curve $T_S$, we will
use the following notations:
\begin{itemize}
\item $C$ : an irreducible cubic not passing through the point
$P$.

\item $\widetilde{C}$ : an irreducible cubic that is smooth at the
point $P$.

\item  $\widehat{C}$ : an irreducible cubic that is singular at
the point $P$.
\end{itemize}

For $i=1,2$
\begin{itemize}

\item $Q_i$ : an irreducible quadric not passing through the point
$P$.

\item $\widetilde{Q_i}$ : an irreducible quadric passing through
the point $P$.
\end{itemize}

For $i=1,2,3,4$
\begin{itemize}

\item $L_i$ : a line not passing through the point $P$.

\item $\widetilde{L_i}$ : a line passing through  the point $P$.
\end{itemize}

Then, the following are all the possible configuration of $T_S$.
In each case, we derive a contradictory inequality from our
assumptions so that the log pair $(X, \lambda D)$ should be log
canonical.
To obtain a contradictory inequality for each case, we start from the inequality
\[
\begin{split}
4=T_S\cdot D_S =\sum m_iZ_i\cdot T_S+T_S\cdot\Delta &\geq \sum
m_i\deg(Z_i)+\mathrm{mult}_P(T_S)\mathrm{mult}_P(\Delta) \\ &>
\sum m_i\deg(Z_i)+\mathrm{mult}_P(T_S)\left(\frac{2}{\lambda}-\sum
m_i\mathrm{mult}_P(Z_i)\right),\\ \end{split}\] and then we show
that the number
\[A:=\sum m_i\deg(Z_i)+\mathrm{mult}_P(T_S)\left(\frac{2}{\lambda}-\sum m_i\mathrm{mult}_P(Z_i)\right)\]
is greater than $4$.
\bigskip

\textbf{CASE A.} The curve $T_S$ is an irreducible quartic curve.
\begin{enumerate}

\item $\mathrm{mult}_P(T_S)=2$.

$ D_S= mT_S+\Delta$.

A contradictory inequality:
\[ A=4m +2\left(\frac{2}{\lambda}-2m\right)=\frac{4}{\lambda}>4.\]

\item  $\mathrm{mult}_P(T_S)=3$.

$ D_S= mT_S+\Delta$.

An auxiliary inequality:

$$16>\frac{12}{\lambda}+4m \ \ \textnormal{ by Lemma~\ref{lemma:mult-3}}.$$

A contradictory inequality:
\[
A=4m+3\left(\frac{2}{\lambda}-3m\right)=\frac{6}{\lambda}-5m> \frac{6}{\lambda}-5\left(4-\frac{3}{\lambda}\right)=\frac{21}{\lambda}-20>4.
\]

\end{enumerate}

\textbf{CASE B.} The curve $T_S$ is reducible and contains no line
passing through the point $P$.
\begin{enumerate}

 \item $T_S= \widehat{C}+L_1$.

$ D_S= m\widehat{C}+m_1L_1+\Delta$.

An auxiliary inequality :
\[1=L_1\cdot  D_S\geq 3m-2m_1.\]

A contradictory inequality:
\[A = 3m +m_1+2\left(\frac{2}{\lambda}-2m\right)=\frac{4}{\lambda}+m_1-m  \geq \frac{4}{\lambda}+m_1-\frac{1+2m_1}{3} >4.\]

 \item $T_S= \widetilde{Q_1}+\widetilde{Q_2}$.

$ D_S= m_1\widetilde{Q_1}+m_2\widetilde{Q_2}+\Delta$.

A contradictory inequality:
\[A= 2m_1 +2m_2+2\left(\frac{2}{\lambda}-m_1-m_2\right)=\frac{4}{\lambda}>4.\]

 \end{enumerate}

\textbf{CASE C.}  The curve $T_S$ contains a unique line passing
through the point $P$.
\begin{enumerate}

 \item $T_S= \widetilde{Q_1}+\widetilde{L_1}+L_2$.

$ D_S=m\widetilde{Q_1}+m_1\widetilde{L_1}+m_2L_2+\Delta$.

Auxiliary inequalities:
\[\left. \begin{array}{l} 2=Q_1\cdot  D_S\geq -2m+2m_1+2m_2 \\
1=L_2\cdot  D_S\geq 2m+m_1-2m_2 \end{array}\right\}\Rightarrow
1\geq m_1
\]

A contradictory inequality:
\[A=2m +m_1+m_2+2\left(\frac{2}{\lambda}-m-m_1\right) =\frac{4}{\lambda}+m_2-m_1\geq \frac{4}{\lambda}+m_2-1 >4.\]

 \item $T_S= \widetilde{C}+\widetilde{L_1}$.

$ D_S= m\widetilde{C}+m_1\widetilde{L_1}+\Delta$.

An auxiliary inequality:
\[3=\widetilde{C}\cdot  D_S\geq 3m_1.\]

A contradictory inequality:
\[A=3m +m_1+2\left(\frac{2}{\lambda}-m-m_1\right) =\frac{4}{\lambda}+m-m_1\geq \frac{4}{\lambda}+m-1 >4.\]

 \item $T_S= \widehat{C}+\widetilde{L_1}$.

$D_S=m\widehat{C}+m_1\widetilde{L_1}+\Delta$.

Auxiliary inequalities:
\[\begin{array}{l} 3=\widehat{C}\cdot D_S\geq 3m_1 \\
16> \frac{12}{\lambda}+3m   \ \ \mbox{ by
Lemma~\ref{lemma:mult-3}}\end{array}\]

A contradictory inequality:
\[\begin{split} A &= \frac{6}{\lambda}-3m-2m_1 \geq \frac{6}{\lambda}-2-3m  > \frac{6}{\lambda}-2-\left(16-\frac{12}{\lambda}\right)=\frac{18}{\lambda}-18 >4.\end{split}\]

 \end{enumerate}

\textbf{CASE D.} The curve $T_S$ contains two lines passing
through the point $P$.
\begin{enumerate}
 \item $T_S=\widetilde{Q_1}+\widetilde{L_1}+\widetilde{L_2}$.

$D_S=m\widetilde{Q_1}+m_1\widetilde{L_1}+m_2\widetilde{L_2}+\Delta$,

Auxiliary inequalities:
\[\begin{array}{l} 2=\widetilde{Q_1}\cdot D_S\geq -2m+2m_1+2m_2 \\
16> \frac{12}{\lambda}+2m   \ \ \mbox{ by
Lemma~\ref{lemma:mult-3}.}\end{array}\]

A contradictory inequality:
\[\begin{split}
A=\frac{6}{\lambda}-m-2m_1-2m_2  \geq \frac{6}{\lambda}-m-2\left(1+m\right)  > \frac{6}{\lambda}-2-3\left(8-\frac{6}{\lambda}\right)=
\frac{24}{\lambda}-26> 4. \end{split}
\]

 \item $T_S= \widetilde{L_1}+\widetilde{L_2}+L_3+L_4$.

$ D_S=m_1\widetilde{L_1}+m_2\widetilde{L_2}+m_3L_3+m_4L_4
+\Delta$, where we may assume that $m_3\geq m_4$.

An auxiliary inequality:
\[1=L_4\cdot  D_S\geq m_1+m_2+m_3-2m_4\]

A contradictory inequality:
\[\begin{split} A =\frac{4}{\lambda}+m_3+m_4-\left(m_1+m_2\right)  &\geq \frac{4}{\lambda}+m_3+m_4-m_2-\left(1-m_2-m_3+2m_4\right) \\ &\geq \frac{4}{\lambda}+2m_3-m_4-1 >4. \end{split}\]

 \item $T_S=Q_1+\widetilde{L_1}+\widetilde{L_2}$.

$ D_S=mQ_1+m_1\widetilde{L_1}+m_2\widetilde{L_2}+\Delta$.

An auxiliary inequality:
\[2=Q_1\cdot  D_S\geq -2m+2m_1+2m_2\ \Rightarrow 1+m\geq m_1+m_2. \]

A contradictory inequality:
\[\begin{split} A=\frac{4}{\lambda}+2m-m_1-m_2\geq \frac{4}{\lambda}+m-1 >4.
\end{split}\]

\end{enumerate}
\textbf{CASE E.} The curve $T_S$ contains three lines passing
through the point $P$.

\begin{enumerate}
 \item $T_S= \widetilde{L_1}+\widetilde{L_2}+\widetilde{L_3}+L_4$.

$D_S=m_1\widetilde{L_1}+m_2\widetilde{L_2}+m_3\widetilde{L_3}+mL_4+\Delta$.

Auxiliary inequalities:
\[\begin{array}{l} 1=L_4\cdot D_S\geq -2m+m_1+m_2+m_3 \\
16> \frac{12}{\lambda}+m   \ \ \mbox{ by
Lemma~\ref{lemma:mult-3}.}\end{array}\]

A contradictory inequality:
\[\begin{split}
A=\frac{6}{\lambda}+m-2\left(m_1+m_2+m_3\right) \geq \frac{6}{\lambda}+m-2\left(1+2m\right)> \frac{6}{\lambda} -2 -3\left(16-\frac{12}{\lambda}\right)=\frac{42}{\lambda}-50 =4. \end{split}
\]

\end{enumerate}

Therefore, Theorem~\ref{theorem:main} for $n=4$ has been proved.

\subsection{General quintic.}\label{subsection:general-quintic}
In this section, we prove Theorem~\ref{theorem:main} for $n=5$.

Let $X$ be a quintic hypersurface in $\mathbb{P}^{5}$ such that
the following generality conditions hold:
\begin{itemize}
\item[G\,1.]  The hypersurface $X$ is $4$-regular;
\item [G\,2.] For every $3$-dimensional linear space $\Pi$ in $\mathbb{P}^5$, the intersection $X\cap\Pi$ is irreducible and reduced;
\item[G\,3.] For each point $P\in X$ and each $3$-dimensional linear space $\Pi$ contained in the tangent hyperplane at $P$ and containing the point $P$, if the surface $Z:=X\cap\Pi$ has multiplicity two at the point $P$, then it satisfies the following:
\begin{itemize}
\item[G\,3.0.] The surface $Z$
\begin{itemize}
\item[G\,3.0.1.] cannot be singular along a line passing through the point $P$ ;
\item[G\,3.0.2.] cannot contain four lines passing through the point $P$.

\end{itemize}
\item[G\,3.1.] If $Z$ contains only one line $L$ passing through the point $P$,
\begin{itemize}
\item[G\,3.1.1.] then the line $L$ meets its residual curve by a general hyperplane section in $\Pi$ either at at least one smooth point or at at least two ordinary double points.
 
\end{itemize}
\item[G\,3.2.] If $Z$ contains only two lines,  $L_1$ and $L_2$, passing through the point $P$, then
\begin{itemize}
\item[G\,3.2.1.]  it has at most four singular points on  $L_1\cup L_2$;
\item[G\,3.2.2.]  if it has four singular points  on $L_i$, then
all of them are ordinary double points; \item[G\,3.2.3.]  if it
has three singular points  on $L_i$, then two  of them are
ordinary double points; \item[G\,3.2.4.] if it has exactly
three singular points on the line $L_i$, then the line $L_i$ meets
 its residual curve by a general hyperplane section in $\Pi$ at  one smooth point; 
\item[G\,3.2.5.] if it has exactly two  singular points on
the line $L_i$, then either $P$ is a non-ordinary double point and
the line $L_i$ meets its residual curve by a general hyperplane
section in $\Pi$ at two smooth points, or the point $P$ is an
ordinary double point and the line $L_i$ meets its residual curve
by a general hyperplane section in $\Pi$ at at least one smooth
point. \item[G\,3.2.6.] if it has no singular point other
than $P$ on the line $L_i$, then the line $L_i$ meets
 its residual curve by a general hyperplane section in $\Pi$ at  at least two smooth points. 

\end{itemize}
\item[G\,3.3.] If $Z$ contains three lines, $L_1$, $L_2$ and $L_3$, passing through the point $P$,
\begin{itemize}
\item[G\,3.3.1.]if the three lines are coplanar, then it is
smooth on $(L_1\cup L_2\cup L_3)\setminus \{P\}$ and each line
$L_i$ meets its residual curve by a general hyperplane section in
$\Pi$  at  four points; \item[G\,3.3.2.] if the three lines
are not coplanar, then either $P$ is a non-ordinary double point,
the surface $Z$ is smooth at every point of $(L_1\cup L_2\cup
L_3)\setminus \{P\}$ and each line $L_i$ meets its residual curve
by a general hyperplane section in $\Pi$  at  four points, or $P$
is an ordinary double point and each line $L_i$ meets its residual
curve by a general hyperplane section in $\Pi$ at two smooth
points.
\end{itemize}
\end{itemize}
\end{itemize}

\begin{lemma}\label{lemma:G2}
A general quintic hypersurface $X$ in $\mathbb{P}^5$ satisfies the condition G\,2.
\end{lemma}
\begin{proof}
This follows directly from \cite[Theorem~5.1]{CChG08}. \end{proof}

\begin{lemma}\label{lemma:G3}
A general quintic hypersurface $X$ in $\mathbb{P}^5$ satisfies the condition G\,3.
\end{lemma}
\begin{proof}
See Appendix.
\end{proof}

Put
$\lambda=\frac{5}{6}$. Let $D$ be an effective
$\mathbb{Q}$-divisor on $X$ such that $D\equiv -K_{X}$. We claim that the log pair $(X,\lambda D)$
is log canonical.

Suppose that the log pair $(X,\lambda D)$ is not log canonical.  As in the case of quartic threefolds, we may assume that $D=\frac{1}{n}R$
where $R$ is an irreducible divisor with
$R\sim-nK_{X}$ for some natural number $n$.
Furthermore, the following lemma enables us to assume $n>1$.
We may use Proposition~\ref{proposition:general-1st-lct} with extra generality conditions in order to assume $n>1$ without the aid of the lemma.

\begin{lemma}
If a quintic hypersurface $Y$ in $\mathbb{P}^5$ is $4$-regular, then
$\lct_1(Y)=1$.
\end{lemma}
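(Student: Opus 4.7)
The plan is to follow the same template as Proposition~\ref{proposition:general-1st-lct}, but using $4$-regularity of $Y$ in place of a Bertini-style generality argument. Any $L\in|-K_Y|$ is a hyperplane section $L=Y\cap\Pi$ of the smooth quintic $Y$, so $L$ is a reduced Cartier divisor on the smooth variety $Y$. By inversion of adjunction, it therefore suffices to show that $L$ itself is log canonical. Points of $L$ at which $L$ is smooth pose no problem, so suppose $L$ is not log canonical at some $P\in\operatorname{Sing}(L)$. Smoothness of $Y$ forces $\Pi=T_PY$, the embedded tangent hyperplane at $P$.

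Choose affine coordinates $(y_0,\ldots,y_4)$ centred at $P$ with $T_PY=\{y_0=0\}$, so that the local equation of $Y$ reads $y_0+F_2+F_3+F_4+F_5=0$ with $F_i$ homogeneous of degree $i$. The local equation of $L$ inside $T_PY\cong\mathbb{C}^4$ is
\[
g(y_1,\ldots,y_4)=\bar F_2+\bar F_3+\bar F_4+\bar F_5,\qquad \bar F_i:=F_i(0,y_1,\ldots,y_4),
\]
with $\bar F_i$ homogeneous of degree $i$ in four variables. I will then argue that $4$-regularity of $Y$ in the sense of \cite{Pu98a} translates exactly into the statement that the sequence $\bar F_2,\bar F_3,\bar F_4,\bar F_5$ is a regular sequence in $\mathbb{C}[y_1,\ldots,y_4]$; in particular the projectivised tangent cone $\{\bar F_2=0\}\subset\mathbb{P}^3$ is a nonzero reduced quadric surface (of rank $2$, $3$, or $4$), which has only canonical, and in particular log canonical, singularities.

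Granting this, Lemma~\ref{lemma:basic-property}(4) applied to $g$ in $\mathbb{C}^4$ gives $c_0(g)=\min(1,4/2)=1$, so $L$ is log canonical at $P$, contradicting our assumption. Thus $(Y,L)$ is log canonical for every $L\in|-K_Y|$, and $\lct_1(Y)=1$.

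The hard step is the transition from $4$-regularity to the non-degeneracy of $\bar F_2$. If the regular-sequence formulation does not by itself exclude the possibility $\bar F_2=\ell^2$ for some linear form $\ell$, then one must treat the double-plane case separately via Lemma~\ref{lemma:weight-method} with weights $w(\ell)=2$, $w(y_i)=1$ for the remaining coordinates: the weighted leading term of $g$ becomes the cubic $\bar F_3$ restricted to $\{\ell=0\}$, and the residual regularity of $\bar F_3,\bar F_4,\bar F_5$ modulo $\ell$ — still a consequence of $4$-regularity — forces this cubic to define a log canonical surface in $\mathbb{C}^3$, again yielding $c_0(g)\geq 1$.
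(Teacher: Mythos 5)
Your reduction to the affine local picture $g=\bar F_2+\bar F_3+\bar F_4+\bar F_5$ on the tangent hyperplane is exactly how the paper begins, and the observation that a reduced $\bar F_2$ finishes the proof via Lemma~\ref{lemma:basic-property}(4) is correct. The gap is in the key claim: $4$-regularity in the sense of \cite{Pu98a} is a regular-sequence condition (the paper uses it in the form ``$x,q_2,q_3,q_4$ form a regular sequence''), and a regular sequence does \emph{not} force its first member to be reduced. The sequence $y_1^2,\bar F_3,\bar F_4,\bar F_5$ is perfectly regular in $\mathbb{C}[y_1,\dots,y_4]$ provided the later terms cut the dimension down correctly, so the rank-one case $\bar F_2=\ell^2$ is not excluded --- and it is precisely the case where the entire difficulty of the lemma lives. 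Your main line of argument therefore does not close.

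Your fallback for the double-plane case also fails, for two reasons. First, the weights $w(\ell)=2$, $w(y_i)=1$ give $w(\ell^2)=4$ but weight $3$ to the $\ell$-free part of $\bar F_3$, so the weighted leading term loses the quadratic term entirely; the paper instead uses $w=(3,2,2,2)$, which keeps $f_w=x^2+g_3$ and yields $c_0(f)\geq\min\{1,\tfrac12+c_0(g_3)\}$. Second, and more seriously, regularity of $\bar F_3,\bar F_4,\bar F_5$ modulo $\ell$ only says their common zero locus in $\{\ell=0\}\cong\mathbb{P}^2$ is empty; it does not prevent $\bar F_3|_{\ell=0}$ from being a triple line $m^3$, in which case $c_0(g_3)=\tfrac13$ and the weight argument gives nothing. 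That sub-case is where essentially all of the paper's proof takes place: a sequence of holomorphic coordinate changes bringing $f$ to the normal form $x^2+y^3+\psi_4+\psi_5+\psi_6+\cdots$, a case analysis on which of $\psi_4',\hat\psi_4,\psi_5',\hat\psi_5,\psi_6'$ vanish (each nonvanishing case handled by a specific weight), and finally an argument that if all of them vanish then $\mathrm{Sing}(f=0)$ contains the positive-dimensional set $(x=y=g_2'=0)$, contradicting the fact that a hyperplane section of a smooth hypersurface has only isolated singularities. None of this machinery, nor any substitute for it, appears in your proposal, so the argument is incomplete at exactly the point where the statement is hard.
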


\begin{proof}
The main idea of the proof is the same as that of
Proposition~\ref{theorem:auxiliary}. It is enough to prove
$c_0(f)=1$ for a quintic polynomial $f(x,y,z,u)\in
\mathbb{C}[x,y,z,u]$ obtained from the quintic polynomial defining
the quintic $Y$. Using the $4$-regular condition we can derive
enough monomials from the polynomial $f$ to have $c_0(f)=1$. We
omit the detailed computation. For the detail, see \cite{W10}.
\end{proof}

 It
follows from Lemma~\ref{lemma:single-point} that there is a point
$P\in X$ such that the log pair $(X,\lambda D)$ is log canonical
on $X\setminus P$. Therefore, the log pair $(X, \lambda D)$ is not
log canonical only at the point $P$.

By suitable coordinate changes, we may assume that $P=[0:0:0:0:0:1]$ and that the
fourfold $X$ is given by an equation
$$
w^{4}x+\sum_{i=2}^{5}w^{5-i}q_{i}\left(x,y,z,u,v\right)=0\subset\mathbb{P}^{5}\cong\mathrm{Proj}\left(\mathbb{C}\big[x,y,z,u,v,w\big]\right),
$$
where
$q_{i}(x,y,z,u,v)$ is a homogeneous polynomial of degree $i$.   Let
$T$ be the threefold  on $X$ cut by $x=0$.

Let $\pi\colon U\to X$ be the blow up at the point $P$ with the exceptional divisor $E$. Then
$$
\bar{D}\equiv\pi^{*}\left(D\right)-\mathrm{mult}_{P}\left(D\right)E,
$$
where $\bar{D}$ is the proper transform of the divisor $D$ via the
morphism $\pi$. Note that
$\mathrm{mult}_{P}(D)>\frac{1}{\lambda}$.  It follows from
\cite[Proposition~3]{Pu04d} that either
$\mathrm{mult}_{P}(D)>\frac{2}{\lambda}$ or there is a plane
$\Omega\subset E\cong\mathbb{P}^{3}$ such that
$$
\mathrm{mult}_{P}\left(D\right)+\mathrm{mult}_{\Omega}\left(\bar{D}\right)>\frac{2}{\lambda}.
$$

In the case when $\mathrm{mult}_{P}(D)>\frac{2}{\lambda}$, let
$\mathcal{L}$ be a sufficiently general pencil of hyperplane
sections of $X$ that pass through the point $P$.
In the case when
$\mathrm{mult}_{P}(D)\leq \frac{2}{\lambda}$, let $\mathcal{L}$ be
the pencil of hyperplane sections of $X$ such that
$$
S\in\mathcal{L}\iff\text{either}\ \Omega\subset\bar{S}\ \text{or}\ S=T,%
$$
where $\bar{S}$ is the proper transform of $S$ via the birational
morphism $\pi$. In both the cases, there is a three-dimensional linear subspace
$\Pi\subset\mathbb{P}^{5}$ such that the base locus of
$\mathcal{L}$ consists of the intersection $\Pi\cap X$.

Let $S$ be a general threefold in $\mathcal{L}$. Then $S\ne T$ and
$\mathrm{mult}_{P}(S\cap D)>\frac{2}{\lambda}$.

Put $Z=X\vert_{\Pi}$. The surface $Z$ is reduced and irreducible
because $X$  contains neither quadric surfaces nor planes by our
initial assumption. The $4$-regularity of $X$ implies that
$\mathrm{mult}_{P}(Z)\leq 3$.

\begin{lemma}\label{lemma:mult 3}
\label{lemma:mult-2} The multiplicity of $Z$ at the point $P$ is $3$.
\end{lemma}

\begin{proof}
Suppose that $\mathrm{mult}_{P}(Z)\leq 2$.
Let $\mathcal{M}$ be the 4th hypertangent linear system at the point $P$ and let $M$ be a general member in $\mathcal{M}$.
The base locus of $\mathcal{M}$ consists of finitely many
lines  on $X$ that pass through the point $P$.

Put $D\cap
S=mZ+\Upsilon$, where $m$ is a non-negative rational number and
$\Upsilon$ is a $2$-cycle whose support does not contain the
surface $Z$. Then
$
\mathrm{mult}_{P}\left(\Upsilon\right)>\frac{2}{\lambda}-2m
$
but $T$ does not contain components of $\Upsilon$. We therefore have
$$\mathrm{mult}_{P}(T\cap\Upsilon)>\frac{4}{\lambda}-4m.$$

We then consider the one cycle  $T\cap\Upsilon$. We may write
\[T\cap\Upsilon=\sum_{i=1}^k\alpha_iL_i +\Delta,\]
where $L_i$ is a line contained in $Z$ and passing through the point $P$ and the support of $\Delta$ contain none of the lines
$L_i$'s.
We have
$$
M\cdot \Delta=M\cdot\left(T\cdot D\cdot S-mT\cdot Z-\sum_{i=1}^k\alpha_iL_i\right)=20-20m-4\sum_{1=1}^k\alpha_i%
$$
and
$$
M\cdot \Delta\geq \mathrm{mult}_{P}\left(M\right)\mathrm{mult}_{P}\left(\Delta\right)>\frac{20}{\lambda}-20m-5\sum_{i=1}^k\alpha_i,%
$$
and hence
\[4=\frac{20}{\lambda}-20<\sum_{i=1}^{k}\alpha_i.\]

On the other hand, using our generality condition G\,3, we obtain the opposite inequality $\sum_{i=1}^{k}\alpha_i\leq 4$ case by case as follows, so that we could conclude that $\mult_P(Z)$=3.

By our generality condition, we have $k\leq 3$. Note that we may regard $T\cap\Upsilon$
as a divisor in $|\mathcal{O}_{Z}(1-m)|$  on the quintic surface $Z\subset\Pi\cong\mathbb{P}^3$ since $T\cap S= Z$. For each line $L_j$  we consider the hyperplane section $A_j$ of $Z$ by a general hyperplane in $\Pi$ passing through the line $L_j$.  The divisor $A_j$ on the surface $Z$ consists of the line $L_j$ and the residual curve $C_j$. On the surface $Z$, we have
\begin{equation}\label{inequlality:alpha}
 \sum_{i=1}^k \alpha_iC_j\cdot L_i\leq C_j\cdot(\sum_{i=1}^k\alpha_iL_i +\Delta)=4(1-m)\leq 4.
\end{equation}

On the surface $Z$, the local intersection number of $C_i$ and $L_j$ at an ordinary double point of $Z$ is well-defined and it is at least $\frac{1}{2}$ if these two curves intersect there. The local intersection number of $C_i$ and $L_j$ at a smooth point of $Z$ is at least $1$ if  these two curves intersect there.

\textbf{CASE $k=1$.}

G\,3.1.1  implies $1\leq C_1\cdot L_1$. Then the inequality~(\ref{inequlality:alpha}) implies $\alpha_1\leq 4$.

\medskip

\textbf{CASE $k=2$.}

First we suppose that neither $L_1$ nor $L_2$ contains exactly two singular points of $Z$.
Then it follows from the conditions G\,3.2.1,~3.2.2,~3.2.3,~3.2.4,~3.2.6 that  $2\leq C_j\cdot L_j$.  This implies that $\alpha_j\leq 2$, and hence $\alpha_1+\alpha_2\leq 4$.

Now we suppose that $L_1$ contains exactly two singular points of $Z$. One of them are the point $P$.

Suppose that $P$ is a non-ordinary double point. Then
$$
2\alpha_1\leq C_1\cdot (\alpha_1L_1+\alpha_2L_2)\leq 4
$$
by G\,3.2.5. Thus, we have $\alpha_1\leq 2$. On the other hand, it
follows from G\,3.2.3, G\,3.2.4, G\,3.2.5 and G\,3.2.6 that
$$
2\alpha_2\leq C_2\cdot (\alpha_1L_1+\alpha_2L_2)\leq 4,
$$
which implies that $\alpha_2\leq 2$. Then $\alpha_1+\alpha_2\leq
4$.

Suppose now that the point $P$ is an ordinary double point. We
obtain from G\,3.2.5 that
\[\frac{3}{2}\alpha_1+\frac{1}{2}\alpha_2\leq C_1\cdot (\alpha_1L_1+\alpha_2L_2)\leq 4.\]
On the other hand, regardless of the number of the singular points on $L_2$, we  see
\[\frac{1}{2}\alpha_1+\frac{3}{2}\alpha_2\leq C_2\cdot (\alpha_1L_1+\alpha_2L_2)\leq 4\]
by G\,3.2.3, G\,3.2.4, G\,3.2.5 and G\,3.2.6, since the point $P$
is  an ordinary double point. These imply that
$\alpha_1+\alpha_2\leq 4$.

\medskip

\textbf{CASE $k=3$.}

Suppose that the three lines are coplanar. Then G\,3.3.1 shows that for each $j=1$, $2$, $3$ we have $3\leq C_j\cdot L_j$, and hence $\alpha_j\leq\frac{4}{3}$. Therefore, we obtain $\alpha_1+\alpha_2+\alpha_3\leq 4$.

Suppose that the three lines are not coplanar. We have to consider
two cases: when the point $P$ is an ordinary double point, and
when the point $P$ is not an ordinary double point.

Suppose that $P$ is not an ordinary double point. Then G\,3.3.2
shows that for each $j=1$, $2$, $3$ we have $3\leq C_j\cdot L_j$,
and hence $\alpha_j\leq\frac{4}{3}$. Therefore, we obtain
$\alpha_1+\alpha_2+\alpha_3\leq 4$.

Suppose that $P$ is an ordinary double point. Then G\,3.3.2 shows
that for each $i$ and $j$,
$$
C_j\cdot L_i \geq \frac{1}{2}+2\delta_{ij},$$ where $\delta_{ij}$
is the Kronecker-delta function, i.e., $\delta_{ij}=1$ if $i=j$;
$\delta_{ij}=0$ if $i\ne j$. Then the
inequality~(\ref{inequlality:alpha}) implies  that for each $1\leq
j\leq 3$, we have $$\frac{1}{2}(\alpha_1+\alpha_2
+\alpha_3)+2\alpha_j\leq  \sum_{i=1}^3 \alpha_iC_j\cdot L_i\leq
4,$$ and hence
$$3(\alpha_1+\alpha_2+\alpha_3)\leq\frac{7}{2}(\alpha_1+\alpha_2+\alpha_3)\leq \sum_{j=1}^{3}\sum_{i=1}^3\alpha_iC_j\cdot L_i\leq 12,$$
which implies that $\alpha_1+\alpha_2\leq 4$. This completes the
proof.
\end{proof}

Let $\bar{T}$ be the proper transform of  $T$ via the
birational morphism $\pi$. Then
$$
3=\mathrm{mult}_{P}(Z)=\mathrm{mult}_{P}\left(T\cap
S\right)=\mathrm{mult}_{P}\left(T\right)\mathrm{mult}_{P}\left(S\right)+\mathrm{mult}_{\Omega}\left(\bar{T}\cap\bar{S}\right),%
$$
which implies that $\Omega\subset\bar{T}$. Since
$\mathrm{mult}_{P}(D)>\frac{1}{\lambda}$ and $\mathrm{mult}_{P}(T)=2$, it
follows that
$$
\mathrm{mult}_{P}\left(T\cap
D\right)\geq\mathrm{mult}_{P}\left(T\right)\mathrm{mult}_{P}\left(D\right)+\mathrm{mult}_{\Omega}\left(\bar{T}\cap\bar{D}\right)\geq
2\mathrm{mult}_{P}\left(D\right)+\mathrm{mult}_{\Omega}\left(\bar{D}\right)>\frac{3}{\lambda}.%
$$

Now we restrict everything to a general hyperplane section of the
fourfold $X$.
Let $H$ be a general hyperplane in $\mathbb{P}^5$ passing through the point $P$.
Put \[\tilde{X}=H\cap X, \ \ \tilde{T}=H\cap T, \ \ \tilde{S}=H\cap S, \ \ \tilde{D}=H\cap D, \ \ \tilde{Z}=H\cap Z, \ \ \tilde{\Upsilon}=H\cap \Upsilon.\]
Let $\tilde{P}=[0:0:0:0:1]$.
The threefold $\tilde{X}$ is $4$-regular at the point $\tilde{P}$. The divisor $\tilde{D}$ is equivalent to $\mathcal{O}_{\mathbb{P}^{4}}(1)\vert_{\tilde{X}}$.

We have
$$
\mathrm{mult}_{\tilde{P}}\left(\tilde{T}\right)=2,\ \
\mathrm{mult}_{\tilde{P}}(\tilde{Z})=3,\ \
\mathrm{mult}_{\tilde{P}}\left(\tilde{T}\cap\tilde{D}\right)>\frac{3}{\lambda},
\ \ \mathrm{mult}_{\tilde{P}}\left(\tilde{S}\cap \tilde{D}\right)>\frac{2}{\lambda}. %
$$

The intersection $\tilde{T}\cap \tilde{S}$ consists of the irreducible reduced
curve $\tilde{Z}$.  Put
$$
\tilde{T}\cap \tilde{D}=\bar{m}\tilde{Z}+\Delta,%
$$
where $\bar{m}$ is a non-negative rational number and $\Delta$
is an effective one-cycle on $\tilde{X}$ whose support does not contain
the curve $\tilde{Z}$. Then,
$\mathrm{mult}_{\tilde{P}}(\Delta)>\frac{3}{\lambda}-3\bar{m}$.

Let $\mathcal{N}$ be the third hypertangent linear system at the point $\tilde{P}$.  Lemma~\ref{lemma:hypertangent-base-locus} shows that the base locus of
$\mathcal{N}$ does not contain any curves because the threefold
$\tilde{X}$ contains no lines passing through the point $\tilde{P}$. Hence, for a general member $N$ in $\mathcal{N}$ we
have
$$
    15=N\cdot \tilde{T}\cdot \tilde{D}\geq
15\bar{m}+N\cdot\Delta>15\bar{m}+4\left(\frac{3}{\lambda}-3\bar{m}\right),%
$$
which implies $15>\frac{12}{\lambda}+3\bar{m}$. Since $
\tilde{D}\cap \tilde{S}=m\tilde{Z}+\tilde{\Upsilon}$ and
$\mathrm{mult}_{\tilde{P}}(\tilde{\Upsilon})>\frac{2}{\lambda}-3m$,
on the surface $\tilde{S}$ we have
$$
5-5m=\tilde{Z}\cap
\tilde{\Upsilon}>\mathrm{mult}_{\tilde{P}}\left(\tilde{Z}\right)\mathrm{mult}_{\tilde{P}}\left(\tilde{\Upsilon}\right)>3\left(\frac{2}{\lambda}-3m\right).
$$
Thus, we see
that $4m>\frac{6}{\lambda}-5$.

The curve $\tilde{Z}$ is reduced and $\tilde{S}$ is a sufficiently general
    hyperplane section of $\tilde{X}$ that contains the curve $\tilde{Z}$. Thus, we
have
$$
m=\mathrm{mult}_{\tilde{Z}}(\tilde{D})\leq\mathrm{mult}_{\tilde{Z}}(\tilde{T}\cap  \tilde{D})=\bar{m},%
$$
which implies
$$15\geq
\frac{12}{\lambda}+3m>\frac{12}{\lambda}+\frac{3}{4}\left(\frac{6}{\lambda}-5\right).$$ It contradicts $\lambda=\frac{5}{6}$.

The obtained contradiction completes the proof of
Theorem~\ref{theorem:main}.

\section{Log canonical threshold on a double space}
\subsection{Generalized global log canonical threshold.}
The Picard group of a smooth Fano hypersurface of degree $n\geq 4$
of $\mathbb{P}^{n}$ is generated by an anticanonical divisor.
Therefore, it is natural that we consider only plurianticanonical
divisors when we define its global  log canonical threshold.
However, in other varieties, it may not be enough. Therefore, we
generalizes the global log canonical threshold as follows:
\begin{definition}
Let $X$ be a $\mathbb{Q}$-factorial variety with at worst log
canonical singularities. For an integral  divisor $D$ on the
variety $X$ and a natural number $m>0$, we define the $m$-th
global log canonical threshold of the  divisor $D$ by the number
$$
\mathrm{lct}_{m}\left(X,D\right)
=\mathrm{inf}\ \left\{c\left(X,\frac{1}{m}H\right)\ \Big|\ H\in \big|mD\big|\right\},%
$$
where the number $\mathrm{lct}_{m}(X, D)$ is defined to be
$\infty$ if the linear system $|mD|$ is empty. Also, we define
the global log canonical threshold of $D$ by the number
$$\mathrm{lct}(X,D)=\inf_{n\in\mathbb{N}}\ \Big\{ \mathrm{lct}_{n}(X,D)\Big\}.$$
\end{definition}

\subsection{Double spaces.}
\label{section:double-spaces} Let $\pi:V\to \mathbb{P}^n$ be a
smooth double cover  ramified along a hypersurface $S$ of degree
$2m$ in $\mathbb{P}^n$, $n\geq 3$. In addition, let $H$ be the
pull-back of a hyperplane in $\mathbb{P}^n$ by the covering map
$\pi$. We can consider the double cover $V$ as a smooth
hypersurface of degree $2m$ in $\mathbb{P}(1^{n+1},m)$.

\begin{proposition}
\label{proposition:double-spaces-equality} The global log canonical threshold
$\mathrm{lct}(V, H)$ is equal to the first global log canonical threshold $\mathrm{lct}_{1}(V, H)$.
\end{proposition}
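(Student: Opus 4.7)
The inequality $\mathrm{lct}(V,H) \leq \mathrm{lct}_{1}(V,H)$ is immediate from the definitions, so the content is the reverse inequality: every effective $\mathbb{Q}$-divisor $D$ on $V$ with $D \equiv H$ satisfies $c(V,D) \geq \mathrm{lct}_{1}(V,H)$. Writing $D$ as a convex $\mathbb{Q}$-combination of divisors of the form $\tfrac{1}{n}R$ with $R \in |nH|$ irreducible and invoking the convexity of the locus of log canonical pairs, it suffices to prove the bound for $D = \tfrac{1}{n}R$ with $R$ an irreducible divisor in $|nH|$.

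Let $\tau$ denote the Galois involution of $\pi$. Every irreducible $R \in |nH|$ falls into one of three types:
\begin{itemize}
\item[(a)] $R = \pi^{*}R_{0}$ for an irreducible hypersurface $R_{0}$ of degree $n$ on $\mathbb{P}^{n}$ distinct from the branch locus $S$;
\item[(b)] $R$ is the ramification divisor $R_{\pi}$ (which forces $n = m$);
\item[(c)] $\tau^{*}R \neq R$; then $\pi|_{R}\colon R \to R_{0} := \pi(R)$ is birational and $R + \tau^{*}R = \pi^{*}R_{0}$ with $\deg R_{0} = 2n$.
\end{itemize}
Case (b) is immediate: $R_{\pi}$ is smooth, hence $c(V, \tfrac{1}{m}R_{\pi}) = m \geq 1 \geq \mathrm{lct}_{1}(V,H)$.

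For case (a), the Hurwitz identity $\pi^{*}(K_{\mathbb{P}^{n}} + \tfrac{1}{2}S) = K_{V}$ combined with Koll\'ar's finite cover formula gives, for any effective $\mathbb{Q}$-divisor $\Delta$ on $\mathbb{P}^{n}$ not supported on $S$, that $(V, \pi^{*}\Delta)$ is log canonical if and only if $(\mathbb{P}^{n}, \tfrac{1}{2}S + \Delta)$ is log canonical. Applied with $\Delta = \tfrac{c}{n}R_{0}$ and with $\Delta = cL$ for a hyperplane $L$, this converts both $c(V, \tfrac{1}{n}\pi^{*}R_{0})$ and $\mathrm{lct}_{1}(V,H)$ to log canonical thresholds of pairs on the base. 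The required inequality then amounts to the assertion that, at any point $P \in \mathbb{P}^{n}$ where $(\mathbb{P}^{n}, \tfrac{1}{2}S + \tfrac{c}{n}R_{0})$ fails to be log canonical, one can find a hyperplane $L$ through $P$ for which $(\mathbb{P}^{n}, \tfrac{1}{2}S + cL)$ also fails to be log canonical at $P$. I would prove this by choosing $L$ generic among hyperplanes through $P$ tangent to a log canonical center of $(\mathbb{P}^{n}, \tfrac{1}{2}S + \tfrac{c}{n}R_{0})$, and applying inversion of adjunction on $L$ to transfer the non-log-canonicity of $\tfrac{c}{n}R_{0}|_{L}$ to the hyperplane situation.

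The main obstacle is case (c), because the direct convexity estimate $c(V, \tfrac{1}{2}(D + \tau^{*}D)) \geq c(V,D)$ points in the wrong direction and the averaged divisor $\pi^{*}(\tfrac{1}{2n}R_{0})$ comes from an $R_{0}$ of degree $2n$, outside the reach of case (a). My plan is to argue locally at any non-log-canonical center $Q$ of $(V, cD)$. If $Q$ does not lie on the ramification divisor, then $\pi$ is \'etale at $Q$ and the local computation reduces to an ordinary log canonical threshold on $\mathbb{P}^{n}$, which is handled just as in case (a). If $Q$ lies on the ramification, the $\tau$-symmetry forces $\mathrm{mult}_{Q}(R) = \mathrm{mult}_{Q}(\tau^{*}R)$, so $\mathrm{mult}_{Q}(\pi^{*}R_{0}) = 2\,\mathrm{mult}_{Q}(R)$; combined with irreducibility of $R$ this provides the multiplicity input that allows one to apply case (a) to the invariant divisor $\pi^{*}R_{0}$ and recover the required bound on $c(V, \tfrac{1}{n}R)$.
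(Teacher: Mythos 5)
Your reduction and case split by the Galois involution are reasonable bookkeeping, but the two steps that carry all the weight are not proofs. In case (a) the assertion you need --- that if $(\mathbb{P}^{n},\tfrac{1}{2}S+\tfrac{c}{n}R_{0})$ fails to be log canonical at $P$ then some hyperplane $L$ through $P$ makes $(\mathbb{P}^{n},\tfrac{1}{2}S+cL)$ fail to be log canonical at $P$ --- is precisely the proposition restated on the base, and the proposed mechanism does not produce it: inversion of adjunction (or a Bertini-type restriction) for a general $L$ through $P$ transfers non-log-canonicity of $\tfrac{1}{2}S|_{L}+\tfrac{c}{n}R_{0}|_{L}$ \emph{to the pair on} $L$, which is a statement about $(L,\dots)$, not about $(\mathbb{P}^{n},\tfrac{1}{2}S+cL)$; nothing in the sketch bounds the singularity of $\tfrac{c}{n}R_{0}$ by that of a hyperplane with coefficient $c$. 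Case (c) is worse: you correctly observe that convexity points the wrong way, and the identity $\mathrm{mult}_{Q}(\pi^{*}R_{0})=2\,\mathrm{mult}_{Q}(R)$ at a ramification point gives no control over log canonicity of $(V,\tfrac{\lambda}{n}R)$ --- multiplicity alone does not determine the threshold, and log canonicity of the symmetrized pair $(V,\tfrac{\lambda}{2n}(R+\tau^{*}R))$ does not descend to the summand $R$. There is also a preliminary issue: decomposing $D$ into irreducible pieces each lying in some $|n_{i}H|$ requires $\operatorname{Pic}(V)=\mathbb{Z}H$, which fails for $n=2$, whereas the statement is for $n\geq 2$.

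The missing idea is the global multiplicity estimate that the paper uses in place of any local transfer. After reducing (by convexity) only to the case where $\operatorname{Supp}(D)$ contains no member of $|H|$, the paper locates a point $P$ where the pair is not log terminal and splits according to whether $\pi(P)$ lies on the branch divisor $S$. If $\pi(P)\in S$, the tangent member $T\in|H|$ has $\mathrm{mult}_{P}(T)\geq 2$ and one gets $2\mu=D\cdot T^{n-1}\geq\mathrm{mult}_{P}(D)\,\mathrm{mult}_{P}(T)>2\mu$. If $\pi(P)\notin S$, the Corti--Pukhlikov $2n$-inequality (Proposition~3 of \cite{Pu04d}) supplies a hyperplane $\Lambda$ in the exceptional divisor of the blow-up with $\mathrm{mult}_{P}(D)+\mathrm{mult}_{\Lambda}(\bar{D})>2\mu$, and intersecting with a general $G\in|H|$ whose proper transform contains $\Lambda$ again violates $D\cdot G^{n-1}=2\mu$. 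Both contradictions hinge on the smallness of $H^{n}=2$; no version of your argument uses this global input, and without it (or the $2n$-inequality) I do not see how either of your cases (a) or (c) can be closed.
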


\begin{proof} Let us use the arguments in the proof of \cite[Proposition~5]{Pu04d}.

Suppose that there is a divisor $D$ in the linear system $|\mu H|$
for some integer $\mu\geq 2$ such that
$$
c\left(V, \frac{1}{\mu}D\right)<\mathrm{lct}_{1}(V,H)
\leq 1.%
$$
It follows from
Remark~\ref{remark:weighted-sum} that we may assume that the
support of the divisor $D$ does not contain divisors of the linear
system $|H|$.

Choose a number $\lambda$ such that $c\left(V,
\frac{1}{\mu}D\right)<\lambda <\mathrm{lct}_{1}(V,H)$. Then the
log pair $\left(V, \frac{\lambda}{\mu} D\right)$ is not log
canonical. By \cite[Proposition~4.3]{Pu00} we have the center of a
non-log-canonical singularity of the log pair  $\left(V,
\frac{\lambda}{\mu} D\right)$ at a point $P$ on $V$.

Suppose that $\pi(P)\in S$. Let $T$ be the unique divisor in the
linear system $|H|$ that is  singular at the point $P$. Since we have $\mathrm{mult}_P(D)>\mu$, we
obtain an absurd  inequality
$$
2\mu=  D\cdot T^{n-1}\geq \mathrm{mult}_{P}\left(D\cap
T\right)>2\mu.
$$

Now, we suppose that $\pi(P)\not\in S$. Let $\xi\colon W\to V$ be the blow up at the point $P$ and
$E\cong\mathbb{P}^{n-1}$ be the exceptional divisor of the
birational morphism $\xi$. Then, it follows from
\cite[Proposition~3]{Pu04d} that there is a hyperplane
$\Lambda\subset E$ such that
\begin{equation*}
\label{equation:2n-inequality}
\mathrm{mult}_{P}\left(D\right)+\mathrm{mult}_{\Lambda}\left(\bar{D}\right)>2\mu,
\end{equation*}
where $\bar{D}$ is the proper transform of $D$ on the variety $W$.

Let $G$ be a general divisor in
$|H|$ such that
$\Lambda\subset\mathrm{Supp}(\bar{G})$, where $\bar{G}$ is the
proper transform of $G$ on the variety $W$. Then, we also obtain a contradictory  inequality
$$
2\mu=D\cdot G^{n-1}\geq\mathrm{mult}_{P}\left(D\cap G\right)>2\mu.
$$
\end{proof}

Now we are ready to prove the following result.

\begin{proposition}\label{proposition:double-spaces-inequality}
The following
inequality holds:
$$
\mathrm{lct}\left(V, H\right)\geq\mathrm{min}\left(1, \frac{m+n-1}{2m}\right).%
$$
\end{proposition}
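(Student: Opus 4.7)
By Proposition~\ref{proposition:double-spaces-equality} we have $\mathrm{lct}(V,H) = \mathrm{lct}_{1}(V,H)$, so it suffices to prove $c(V, D) \geq \min\bigl(1, \tfrac{m+n-1}{2m}\bigr)$ for every $D \in |H|$. Any such $D$ is of the form $\pi^{*}(H')$ for a hyperplane $H' \subset \mathbb{P}^n$. The covering map $\pi$ is \'etale outside the ramification divisor $R = \pi^{-1}(S)$, so $D$ is smooth at every point $P \notin R$. For $P \in R$ with $Q = \pi(P) \in S \cap H'$, a short coordinate computation shows that $D$ is smooth at $P$ unless $H'$ equals the tangent hyperplane $T_{Q} S$, so we may assume $H' = T_{Q} S$.

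Choose affine coordinates $(x_1, \ldots, x_n)$ on $\mathbb{P}^n$ near $Q$ so that $H' = \{x_1 = 0\}$ and $S = \{x_1 - g(x_2,\ldots,x_n) = 0\}$ with $\mult_0 g \geq 2$. Then $V$ is locally defined by $u^2 = x_1 - g(x_2, \ldots, x_n)$, and $(u, x_2, \ldots, x_n)$ are local coordinates on $V$ around $P$ in which $D = \pi^{*}(H')$ has local equation $u^{2} + g(x_2, \ldots, x_n) = 0$. Since the variables $u$ and $(x_2, \ldots, x_n)$ are separated, the Thom--Sebastiani formula for log canonical thresholds gives
\[
c_{P}(V, D) \;=\; c_{0}\bigl(u^{2} + g\bigr) \;=\; \min\!\Bigl(1,\; \tfrac{1}{2} + c_{0}(g)\Bigr).
\]
The desired inequality is thus reduced to the bound $c_{0}(g) \geq \tfrac{n-1}{2m}$.

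The polynomial $g$ is the affine equation of $H' \cap S$ at $Q$, whose projective closure in $H' \cong \mathbb{P}^{n-1}$ has degree $2m$. Crucially, the smoothness of $S$ along the hyperplane at infinity of this chart forces the top-degree ($=2m$) homogeneous part $G_{2m}$ of $g$ to cut out a smooth hypersurface in $\mathbb{P}^{n-2}$. When $\mult_0 g = 2m$, one has $g = G_{2m}$, and Lemma~\ref{lemma:basic-property}(4) gives $c_{0}(g) = \tfrac{n-1}{2m}$ directly. When $d := \mult_0 g < 2m$, the idea is to combine the estimate $c_{0}(g) \geq c_{0}(g_d)$ from Lemma~\ref{lemma:basic-property}(2) with a weight-based argument using Lemma~\ref{lemma:weight-method} and the smooth top-degree form $G_{2m}$ to still obtain $c_{0}(g) \geq \tfrac{n-1}{2m}$.

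The main technical hurdle is precisely this last step. The naive multiplicity estimate $c_{0}(g) \geq 1/\mult_0 g$ is insufficient, since $\mult_0 g$ can be as large as $2m$; and $c_{0}(g_d)$ may itself drop below $\tfrac{n-1}{2m}$ when the projectivised tangent cone $\mathbb{P}(g_d = 0)$ is not log canonical. The cleanest route is a case analysis over $d$, picking for each value a weight $w$ on $(x_2, \ldots, x_n)$ whose leading form blends the initial singularity with a portion of the smooth top-degree form $G_{2m}$, in the spirit of the case-by-case computations carried out for Theorems~\ref{theorem:auxiliary} and~\ref{theorem:sextic-double-solid}.
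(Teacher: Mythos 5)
Your reduction follows the same route as the paper's: use Proposition~\ref{proposition:double-spaces-equality} to pass to $\mathrm{lct}_{1}(V,H)$, observe that $D=\pi^{*}(H')$ is singular only where $H'$ is tangent to the branch divisor $S$, write $D$ locally as $u^{2}+g(x_{2},\ldots,x_{n})=0$, and apply the separated-variables formula (Proposition~8.21 in \cite{Ko97}) to get $c_{P}(V,D)=\min\bigl(1,\tfrac12+c_{0}(g)\bigr)$. The paper phrases this via the weighted-hypersurface model $w^{2}=f(x_{0},\ldots,x_{n})$ in $\mathbb{P}(1^{n+1},m)$ and the restriction result of \cite{dFEM03}, but the content is identical. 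Up to this point your argument is fine (one small caveat: your claim that the top-degree part $G_{2m}$ of $g$ cuts out a smooth hypersurface in $\mathbb{P}^{n-2}$ requires choosing the hyperplane at infinity of the affine chart generically; this is legitimate because $S\cap H'$ has only isolated singularities, by \cite{Ish82} and \cite{Zak93}, but it should be said).

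The genuine gap is the step you yourself flag as ``the main technical hurdle'': the inequality $c_{0}(g)\geq\frac{n-1}{2m}$, i.e.\ the statement that a hyperplane section of a smooth hypersurface of degree $2m$ in $\mathbb{P}^{n}$ has log canonical threshold at least $\frac{n-1}{2m}$ at every point. This is not something that falls out of a quick weight-by-weight case analysis over $d=\mathrm{mult}_{0}g$: for $2<d<2m$ neither $c_{0}(g)\geq 1/d$ nor $c_{0}(g)\geq c_{0}(g_{d})$ suffices, exactly as you note, and ``blending the initial singularity with a portion of $G_{2m}$'' is a plan, not a proof. This inequality is precisely the main theorem of \cite{Ch01b} (it is the same input that underlies Example~\ref{example:Cheltsov-Park}), and the paper's proof simply invokes it at the final step: $c(V,D)=\tfrac12+\mathrm{lct}\bigl(\mathbb{P}^{n-1},\,f(0,x_{1},\ldots,x_{n})=0\bigr)\geq\tfrac12+\tfrac{n-1}{2m}$ by \cite{Ch01b} and Proposition~8.21 in \cite{Ko97}. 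To complete your argument you should either cite that theorem or supply its proof; as written, the proposal reduces the proposition to an unproven statement of comparable depth.
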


\begin{proof}
By Proposition~\ref{proposition:double-spaces-equality}, it is enough to consider
the first global log canonical threshold $\mathrm{lct}_{1}(V, H)$ instead of $\mathrm{lct}(V, H)$.
Let $D$ be a divisor in
$|H|$.

The double space $V$ can be defined by a quasi-homogenous equation
$
w^{2}=f\left(x_{0},\ldots,x_{n}\right) $ in the weighted projective space
$\mathbb{P}\left(1^{n+1},m\right)\cong\mathrm{Proj}\left(\mathbb{C}[x_{0},\ldots,x_{n},w]\right)$,
where $\mathrm{wt}(x_{i})=1$, $\mathrm{wt}(w)=m$, and $f$ is a homogeneous
polynomial of degree $2m$.  Note that the homogenous polynomial $f$ defines the smooth hypersurface $S$ in $\mathbb{P}^n$ since $V$ is smooth.   We may assume that the divisor $D$
is cut out on $V$ by the equation $x_{0}=0$. The divisor $D$ is a
hypersurface in $\mathbb{P}\left(1^{n},m\right)\cong\mathrm{Proj}\left(\mathbb{C}[x_{1},\ldots,x_{n},w]\right)$ defined by the equation $w^{2}=f\left(0,x_{1},\ldots,x_{n}\right)$.
It has isolated singularities since the hypersurface
$$
D_S:=\{f\left(0,x_{1},\ldots,x_{n}\right)=0\}\subset\mathbb{P}^{n-1}\cong\mathrm{Proj}\left(\mathbb{C}[x_{1},\ldots,x_{n}]\right),
$$
has isolated singularities (see \cite{Ish82}).

It follows from \cite[Theorem~3.1]{dFEM03} that the log pair
$(V,\lambda D)$ is log terminal if and only if
$(\mathbb{P}(1^{n},m), \lambda D)$ is log terminal because $V$ is
smooth and the divisor $D$ is contained in the smooth locus of
$\mathbb{P}(1^{n},m)$. It then follows from  \cite[Proposition~8.21]{Ko97} that $$
c(V, D)=c\left(\mathbb{P}\left(1^{n},m\right), D\right)=\frac{1}{2}+c\left(\mathbb{P}^{n-1}, D_S\right).
$$
We then see that \cite[Theorem~3.1]{dFEM03} and  \cite[Theorem~3.3]{ChPa02} imply $$c\left(\mathbb{P}^{n-1}, D_S\right)=c\left(S, D_S\right)\geq
\frac{n-1}{2m}.$$
This completes the proof.
\end{proof}

Let $\pi\colon V\to\mathbb{P}^{n}$ be
a double cover ramified along a smooth
hypersurface of degree~$2n\geq 4$. It is a Fano variety of Fano index $1$ and the pull-back of a hyperplane in $\mathbb{P}^n$ is an anticanonical divisor of $V$. It follows from
Proposition~\ref{proposition:double-spaces-inequality} (for $n\geq 3$) and \cite[Theorem~1.7]{Ch07b} (for $n=2$)
that
$$
\mathrm{lct}\left(V\right)\geq \frac{2n-1}{2n},%
$$
while \cite[Theorem~2]{Pu04d} shows that $\mathrm{lct}(V)=1$ if
$V$ is general and $n\geq 3$. Therefore,
we immediately
obtain the following result that has been proved by
\cite{ArGha06} in a different way.

\begin{corollary}
\label{corollary:KE-double cover} A smooth double cover of $\mathbb{P}^n$ ramified along a hypersurface of degree $2n\geq 4$ admits a K\"ahler-Einstein metric.
\end{corollary}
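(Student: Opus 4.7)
The plan is to combine Proposition~\ref{proposition:double-spaces-inequality} with Theorem~\ref{theorem:KE}; essentially all the technical work has already been done, and what remains is identifying $-K_V$ and verifying one numerical inequality.

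First I would spell out the Fano index computation. Let $\pi\colon V\to\mathbb{P}^{n}$ be the double cover ramified along a smooth hypersurface $B$ of degree $2n$, and let $H=\pi^{*}(H_{\mathbb{P}^{n}})$. Writing the ramification divisor $R$, the relation $\mathcal{O}_{V}(R)\cong\pi^{*}\mathcal{O}_{\mathbb{P}^{n}}(n)$ together with the ramification formula gives
\[
K_{V}=\pi^{*}K_{\mathbb{P}^{n}}+R\sim\bigl(-(n+1)+n\bigr)H=-H.
\]
In particular $-K_{V}\sim H$, so by the definition of the global log canonical threshold one has the identification $\mathrm{lct}(V)=\mathrm{lct}(V,H)$.

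Next I would invoke Proposition~\ref{proposition:double-spaces-inequality} with $m=n$. It yields
\[
\mathrm{lct}(V)=\mathrm{lct}(V,H)\geq\min\!\left(1,\frac{m+n-1}{2m}\right)=\min\!\left(1,\frac{2n-1}{2n}\right)=\frac{2n-1}{2n}.
\]
The elementary inequality $(2n-1)(n+1)-n(2n)=n-1>0$ for $n\geq 2$ shows that $\frac{2n-1}{2n}>\frac{n}{n+1}$, hence
\[
\mathrm{lct}(V)>\frac{n}{n+1}.
\]

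Finally, since $V$ is smooth (a fortiori has at worst quotient singularities), Theorem~\ref{theorem:KE} applies and produces a K\"ahler--Einstein metric on $V$. The only step with any content is Proposition~\ref{proposition:double-spaces-inequality}, which is already established, so there is no genuine obstacle here; the argument is just the clean concatenation outlined above. The low-dimensional case $n=2$ (where $V$ is a degree-two del Pezzo surface) is covered by exactly the same inequality $\tfrac{3}{4}>\tfrac{2}{3}$, and in fact agrees with the value recorded in~(\ref{example:del-Pezzo}).
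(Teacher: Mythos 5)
Your argument is correct and is essentially the paper's own proof: the paper likewise notes that $-K_V\sim H$ (Fano index $1$), applies Proposition~\ref{proposition:double-spaces-inequality} with $m=n$ to get $\mathrm{lct}(V)\geq\frac{2n-1}{2n}>\frac{n}{n+1}$, and concludes via Theorem~\ref{theorem:KE}. Your explicit ramification-formula computation and the check of the numerical inequality only make explicit what the paper leaves implicit.
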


\begin{remark}
\label{corollary:double-spaces}
Combining the results of \cite{Ch01b} and the proof of
Proposition~\ref{proposition:double-spaces-inequality}, we can
easily obtain the following.
Let $V$ be the smooth hypersurface
in $\mathbb{P}(1^{n+1},m)$ of degree $2m\geq 2n\geq 6$ given by an equation
$$
w^{2}=f\left(x_{0},\ldots,x_{n}\right)\subset\mathbb{P}\left(1^{n+1},m\right)\cong\mathrm{Proj}\left(\mathbb{C}[x_{0},\ldots,x_{n},w]\right),
$$
where $\mathrm{wt}(x_{i})=1$, $\mathrm{wt}(w)=m$, and $f$ is a homogeneous
polynomial of degree $2m$. Suppose that
$$
c\left(V, D\right)=\frac{m+n-1}{2m\mu},
$$
where $D\in |\mu H|$
and $\mu\in\mathbb{N}$. Then $D=\mu T$, where $T$ is a divisor
that is cut out on the hypersurface $V$ by an equation
$\sum_{i=0}^{n}\lambda_{i}x_{i}=0$ such that the hypersurface
$$
f\left(x_{0},\ldots,x_{n}\right)=\sum_{i=0}^{n}\lambda_{i}x_{i}=0\subset\mathbb{P}^{n-1}\cong\mathrm{Proj}\left(\mathbb{C}[x_{0},\ldots,x_{n}]\Big\slash \left(\sum_{i=0}^{n}\lambda_{i}x_{i}\right)\right)%
$$
is a cone over a smooth hypersurface in $\mathbb{P}^{n-2}$ of
degree $2m$.
\end{remark}

We can also give an easy proof of the following result that is a
corollary of  \cite[Theorem~2]{Pu04d}.

\begin{proposition}
\label{proposition:generic-double-spaces} Let $V$ be the double cover of $\mathbb{P}^n$, $n\geq 3$, ramified along a general hypersurface $S$ of degree $2n$ in $\mathbb{P}^n$.  Then $\mathrm{lct}(V,
H)=1$.
\end{proposition}

\begin{proof}
We assume that for every hyperplane
$M\subset\mathbb{P}^{n}$, the intersection $S\cap M$ has at most
isolated double points. This generality condition is obviously
satisfied for a general hypersurface $S$ because
$n\geq 3$.

Let $D$ be a divisor in the linear system $|H|$. It follows from
 \cite[Lemma~8.12]{Ko97} that the singularities of
the log pair $(V, D)$ are log canonical if and only if the
singularities of the log pair
$$
\left(\mathbb{P}^{n}, \pi(D)+\frac{1}{2}S\right)
$$
are log canonical. Put $M=\pi(D)$. It follows from
\cite[Theorem~7.5]{Ko97} that the singularities of the log pair
$(V, D)$ are log canonical if and only if the log pair $(M,
\frac{1}{2}S\vert_{M})$ is log canonical. But the log pair $(M,
\frac{1}{2}S\vert_{M})$ is log canonical because $S\vert_{M}$ has
at most  isolated double points.
\end{proof}

The generality assumption in
Proposition~\ref{proposition:generic-double-spaces} is weaker than
that of  \cite[Theorem~2]{Pu04d}.

Let $V$ be the double cover of $\mathbb{P}^3$ ramified along a smooth sextic $S\subset\mathbb{P}^3$. Note that the pull-back of a hyperplane in
 $\mathbb{P}^3$ is an anticanonical divisor.
As we did for quartic threefolds, we are also able to find all the
possible first global log canonical thresholds of $V$.

\begin{proposition}
\label{theorem:sextic-double-solid} Let $V$ be the smooth double
cover of $\mathbb{P}^3$ ramified along a sextic. Then,  the first
global log canonical threshold of the Fano variety $V$ is one of
the following:
$$
\{\frac{5}{6},\frac{43}{50},\frac{13}{15},\frac{33}{38},\frac{7}{8},\frac{33}{38},
\frac{8}{9}, \frac{9}{10},\frac{11}{12},\frac{13}{14},
\frac{15}{16},\frac{17}{18},
\frac{19}{20},\frac{21}{22},\frac{29}{30},1\}.$$ Furthermore, for
each number $\mu$ in the set above, there is a smooth double cover
$V$ of $\mathbb{P}^3$ ramified along a sextic with
$\mathrm{lct}_{1}(V)=\mu$.
\end{proposition}
\begin{proof}
For the proof, see \cite{W10}.
Its brief idea is as follows.
For a hyperplane $H$ in $\mathbb{P}^3$, we see that
\[c\left(V, \pi^*(H)\right)=\min\left\{1, \frac{1}{2}+c\left(H, H\cap S\right)\right\}.\]
The intersection $H\cap S$ is a reduced  sextic plane curve on
$H\cong\mathbb{P}^2$. Therefore, for the first statement of
Proposition~\ref{theorem:sextic-double-solid}, it is enough to
consider all the possible values of $c(\mathbb{P}^2, C)$ for
reduced sextic plane curves. Furthermore, we can consider only the
values for $c_0(f)$, where $f$ is a reduced sextic polynomial
vanishing at the origin.
\end{proof}

Because the first global log canonical thresholds coincide with
the global log canonical thresholds on double spaces,
Proposition~\ref{theorem:sextic-double-solid} implies a stronger
result as follows.
\begin{corollary}
\label{corollary:global-lct-sextic-double-solid} Let $V$ be a
smooth double cover of $\mathbb{P}^3$ ramified along a sextic.
Then, the  global log canonical threshold of the Fano variety $V$
is one of the numbers in
Proposition~\ref{theorem:sextic-double-solid}. Furthermore, for
each number $\mu$ in
Proposition~\ref{theorem:sextic-double-solid}, there is a smooth
double cover $V$ of $\mathbb{P}^3$ ramified along a sextic with
$\mathrm{lct}(V)=\mu$.
\end{corollary}

Let us finish the paper by an example of a smooth double cover of
$\mathbb{P}^3$ ramified along a sextic surface with the global log
canonical threshold $1$.

\begin{example}
Let $V$ be the smooth double cover of $\mathbb{P}^3$ ramified along the sextic surface $S\subset \mathbb{P}^3$ defined
by the equation
\[x_0^6+x_1^6+x_2^6+x_3^6+x_0^2x_1^2x_2x_3=0.\]
Let $C\subset \mathbb{P}^3$ be the curve defined by the intersection of the surface $S$ and the Hessian surface $\mathrm{Hess}(S)$ of $S$.
 For the tangent hyperplane $T_P$ at a point $P\in S$, if the multiplicity of the curve $T_P\cap S$
at the point $P$ is at least $3$, then the curve $C$ is singular at the point $P$.  Using the computer program, \emph{Singular}, one can check that the curve $C$
 is smooth in the outside of
the curves $x_i=x_j=0$ with $i\ne j$. Furthermore, for a point $P$ in $S$ that belongs to the curves $x_i=x_j=0$ with $i\ne j$, one can easily
check that the log pair $(S, \frac{1}{2}H_P)$ is log canonical, where $H_P$ is the hyperplane section of $S$ by
the tangent hyperplane to $S$ at the point $P$. Consequently, $\mathrm{lct}(V)=\mathrm{lct}_{1}(V)=1$. The variety $V$ is an explicit example of smooth Fano variety with the following properties (We do not know any other explicit example of such a smooth Fano variety).
For each $i=1,2, \cdots, r$, let $V_i=V$.
Then, the paper \cite{Pu04d} implies that the product $V_1\times\cdots\times V_r$ is not rational and
\[\mathrm{Bir}(V_1\times\cdots\times V_r)=\mathrm{Aut}(V_1\times\cdots\times V_r).\] Moreover, for each dominant rational map
$\rho : V_1\times\cdots\times V_r\dasharrow Y$ whose general fiber is rationally connected, there is a subset $\{i_1, \cdots, i_k\}\subset \{1, \cdots, r\}$ such that the diagram

$$
\xymatrix{
&V_1\times\cdots\times V_r \ar@{->}[d]_{\pi}\ar@{-->}[drr]^{\rho}&&&\\
&V_{i_1}\times\cdots\times V_{i_k}\ar@{-->}[rr]_{\bar{\rho}}&&Y&}
$$
commutes, where $\pi$ is the natural projection and $\bar{\rho}$ is a birational map.
\end{example}

\noindent{\bf Acknowledgements.} The authors would like to express their sincere appreciation to the referee for the invaluable comments. The referee's comments enable us to improve their results as well as their exposition.  In particular, the referee pointed out a gap in the previous proof of Lemma~\ref{lemma:mult 3}. To fix the gap, the authors introduce new generality conditions to quintic fourfolds. This serious revision was done while the first two authors  stay at Hausdorff Research Institute for Mathematics at Bonn, Germany for Research in Groups Program from 1st of August to 4th of September 2012. Ivan Cheltsov and Jihun Park would like to thank the institute for their support.
Jihun Park  has been  supported by the Research Center Program  (Grant No. CA1205-02) of Institute for Basic Science
and SRC-GAIA(Grant No. 2011-0030795)  of the National Research
Foundation in Korea.

\setcounter{equation}{0}

\renewcommand{\theequation}{\Roman{equation}}
\renewcommand{\thesection}{}

\section{Appendix}

Let $X_F$ be a smooth quintic hypersurface in $\mathbb{P}^{5}$ that
is given by zeroes of a section $F\in
H^{0}(\mathbb{P}^5,\mathcal{O}_{\mathbb{P}^{5}}(5))$. It follows
from Proposition~\ref{proposition:n-regular} that there exists a
non-empty Zariski open subset $U_{G1}\in
H^{0}(\mathbb{P}^5,\mathcal{O}_{\mathbb{P}^{5}}(5))$ such that $X_F$
is $4$-regular whenever $F\in U_{G1}$. Similarly, it follows
from Lemma~\ref{lemma:G2} that there exists a non-empty Zariski
open subset $U_{G2}\in
H^{0}(\mathbb{P}^5,\mathcal{O}_{\mathbb{P}^{5}}(5))$ such that for
every $3$-dimensional linear space $\Pi$ in $\mathbb{P}^5$, the
intersection $X_F\cap\Pi$ is irreducible and reduced if $F\in
U_{G2}$.

The purpose of this Appendix is to prove Lemma~\ref{lemma:G3},
i.e., to prove the existence of a non-empty Zariski open subset
$U_{G3}\in H^{0}(\mathbb{P}^5,\mathcal{O}_{\mathbb{P}^{5}}(5))$
such that for each $F\in U_{G3}$ the hypersurface $X_F$ satisfies the condition G\,3 (see
Section~\ref{subsection:general-quintic}). Indeed, we prove the statement as follows:
\begin{center}
 \begin{minipage}[m]{.95\linewidth}
 \emph{For each $a (=0,1,2,3) $ and $b (=1,2,\cdots,6)$,
there exists a non-empty Zariski open subset
$U$ in $H^{0}(\mathbb{P}^5,\mathcal{O}_{\mathbb{P}^{5}}(5))$
such that if $F\in U$, then for each point $P\in X$ and
each $3$-dimensional linear space $\Pi_3$ contained in the tangent
hyperplane at $P$ and containing the point $P$, the surface
$Z:=X\cap\Pi_3$ satisfies the condition G\,3.$a$.$b$.}
\end{minipage}
\end{center}
Since we use the same method in order to prove the statement for each $a$ and $b$, we first explain how the proof goes and then show the required computations in each case G\,3.$a$.$b$.

The proof goes as follows.

First we consider
the space
$$
\mathcal{S}=\mathcal{F}\times H^{0}\left(\mathbb{P}^{5},\mathcal{O}_{\mathbb{P}^{5}}\left(5\right)\right)%
$$
with the natural projections $p\colon\mathcal{S}\to
H^{0}(\mathbb{P}^5,\mathcal{O}_{\mathbb{P}^{5}}(5))$ and
$q\colon\mathcal{S}\to \mathcal{F}$.
Here, $\mathcal{F}$ is a suitable flag variety in $\mathbb{P}^5$. Depending on the case, the flag $\mathcal{F}$ will be
$Flag(0,1,2,3,4)$, $Flag(0,2,3,4)$, $Flag(0,1,3,4)$ or $Flag(0,3,4)$, where
$Flag(n_1,\cdots, n_k)$ is the flag variety that parametrizes $k$-tuples $(\Pi_{n_1},\cdots, \Pi_{n_k})$ of $n_i$-dimensional linear spaces with $\Pi_{n_1}\subset\cdots\subset \Pi_{n_k}\subset\mathbb{P}^5$ . A $0$-dimensional linear  space will be denoted by $P$ and a four dimensional linear space will be denoted by $T$.

We then put
\[\mathcal{I}=\left\{ \ \left((P,\Pi_{n_2},\cdots,\Pi_{n_{k-1}},T), F\right)\in\mathcal{S}\ \ \left| \
 \aligned
& F(P)=0;\\ & T\ \mbox{is the  tangent  hyperplane to $X_F$  at}\ P;\\
& X_F \mbox{ satisfies the properties $\mathcal{P}_{G3.a.b}$.}
\endaligned\right.\right\},
 \]
where the properties $\mathcal{P}_{G3.a.b}$ will be specified in the individual proofs.
Then in each case, we will see that it is easy to check that the morphism $q |_\mathcal{I} :\mathcal{I}\to \mathcal{F}$ is surjective.

With this set up, we compute the codimension $c$ of $q |_\mathcal{I}^{-1}(P,\Pi_{n_2},\cdots,\Pi_{n_{k-1}},T)$
in $ H^{0}(\mathbb{P}^5,\mathcal{O}_{\mathbb{P}^{5}}(5))$ for a point $(P,\Pi_{n_2},\cdots,\Pi_{n_{k-1}},T)\in\mathcal{F}$.
We may always assume that
 $T$ is defined by $x=0$, $\Pi_3$ is defined by $x=y=0$, $\Pi_2$ by $x=y=z=0$, $\Pi_1$ by $x=y=z=u=0$ and $P=[0:0:0:0:0:1]$.
We write the quintic polynomial $F$ as
\[w^5q_0+w^{4}q_{1}\left(x,y,z,u,v\right)+w^{3}q_{2}\left(x,y,z,u,v\right)+w^{2}q_{3}\left(x,y,z,u,v\right)+wq_{4}\left(x,y,z,u,v\right)+q_{5}\left(x,y,z,u,v\right),\]
where $q_{i}$ is a homogeneous polynomial of degree $i$.

The condition $F(P)=0$ is equivalent to $q_0=0$. The condition that $T$ is the tangent hyperplane to $X_F$ at $P$ is equivalent to $q_1=\lambda x$ for some $\lambda\in\mathbb{C}^*$. These two conditions contribute to the codimension $c$ by $5$.
For each $a$ and $b$, we will show that that the properties $\mathcal{P}_{G3.a.b}$ makes another contribution to the codimension $c$ by more than
$\dim\mathcal{F}-5$.

 These altogether show that the codimension of $q |_\mathcal{I}^{-1}(P,\Pi_{n_2},\cdots,\Pi_{n_{k-1}}, T)$ in $ H^{0}(\mathbb{P}^5,\mathcal{O}_{\mathbb{P}^{5}}(5))$ is  more than $\dim\mathcal{F}$.
 These implies that the morphism $p|_\mathcal{I}$ cannot be surjective. Taking the properties $\mathcal{P}_{G3.a.b}$ into consideration, we can immediately notice that this non-surjectivity implies the statement.

Therefore, to prove the statement for each case, it is enough to
\begin{itemize}
\item specify the flag $\mathcal{F}$ with its dimension;
\item specify the property $\mathcal{P}_{G3.a.b}$;
\item show that the properties $\mathcal{P}_{G3.a.b}$ makes another contribution to the codimension $c$ by more than
$\dim\mathcal{F}-5$.
\end{itemize}

Now we do these jobs for each case.

\begin{lemma}
\label{appendix:G301} The statement holds for G\,3.0.1.
\end{lemma}

\begin{proof}
The flag $\mathcal{F}$ is $Flag(0,1,3,4)$. It is of dimension 14.
Put
\[\mathcal{P}_{G3.0.1}=\left\{
 X_F\cap\Pi_3 \mbox{ is singular along $\Pi_2$.}
\right\}.
 \]
The condition that $X_F\cap\Pi_3$ contains the line $\Pi_1$ is equivalent to the condition that for each $i=2,3,4,5$, the polynomial $q_i$ contains no $v^i$. For $X_F\cap\Pi_3$ in order to be singular along $L$, for each $i=2,3,4,5$, the polynomial $q_i$ must not contain the monomials $zu^rv^{i-r-1}$, $r=0,1,\cdots, i-1$.
 These altogether show that the properties  $\mathcal{P}_{G3.0.1}$  is of codimension $> 9$.
\end{proof}

\begin{lemma}
\label{appendix:G302} The statement holds for G\,3.0.2.
\end{lemma}
\begin{proof}
The flag $\mathcal{F}$ is $Flag(0,3,4)$. It is of dimension 12.
Put
\[\mathcal{P}_{G3.0.2}=\left\{ X_F\cap\Pi_3 \mbox{ contains four lines.}\right\}.
 \]
Since we may assume that $q_1, q_2, q_3, q_4$ forms a regular sequence, $X_F\cap \Pi_3$ containing four lines is equivalent to $q_5(x,y,z,u,v)$ vanishing at four given points in $q_1(x,y,z,u,v)=q_2(x,y,z,u,v)=q_3(x,y,z,u,v)=q_4(x,y,z,u,v)=0$ in $\mathbb{P}^4$ and
$q_4(0,0,z,u,v)$ vanishing at four given points in $q_2(0,0,z,u,v)=q_3(0,0,z,u,v)=0$ in $\mathbb{P}^2$.
These altogether show that the properties  $\mathcal{P}_{G3.0.2}$  is of codimension 8.
\end{proof}

\begin{lemma}
\label{appendix:G31} The statement holds for G\,3.1.1.
\end{lemma}

\begin{proof}
The flag $\mathcal{F}$ is $Flag(0,1,3,4)$. It is of dimension 14.

Put
\[\mathcal{P}_{G3.1.1}=\left\{
 \aligned
 & X_F\cap\Pi_3 \mbox{ contains $\Pi_1$;}\\
 & \Pi_1 \mbox{  meets its residual curve by a general hyperplane}\\
&  \mbox{section of $X_F\cap\Pi_3$ in $\Pi_3$ only at singular points;}\\
&X_F\cap\Pi_3 \mbox{ has at most one ordinary double point on $\Pi_1$.}\\
\endaligned\right\}.
 \]
 We write \[q_i(0,0,z,u,v)=\sum_{r+s+t=i}A_{rst}z^ru^sv^t,\]
 where $A_{rst}$'s are constants.

The condition that $X_F\cap\Pi_3$ contains the line $\Pi_1$ is equivalent to $A_{00t}=0$ for $t=2$, $3$, $4$ and $5$ since the line $\Pi_1$ is defined by $x=y=z=u=0$.

The surface $X_F\cap\Pi_3$ has singular points on the line $\Pi_1$ exactly where the polynomials $A_{101}vw^3+A_{102}v^2w^2+A_{103}v^3w+A_{104}v^4$ and $A_{011}vw^3+A_{012}v^2w^2+A_{013}v^3w+A_{014}v^4$ have  common zeros in $\mathbb{P}^1$. The zero given by $v=0$ corresponds to the singular point $P$.  To see this, put $\bar{F}(z,u,v,w)=F(0,0,z,u,v,w)$. Since $A_{00t}=0$ for $t=2$, $3$, $4$ and $5$, we always have
$\frac{\partial\bar{F}}{\partial v}(0,0,v,w)=\frac{\partial\bar{F}}{\partial w}(0,0,v,w)=0$. The common zeros of
$$\frac{\partial\bar{F}}{\partial z}(0,0,v,w)=v(A_{101}w^3+A_{102}vw^2+A_{103}v^2w+A_{104}v^3),$$
$$\frac{\partial\bar{F}}{\partial u}(0,0,v,w)=v(A_{011}w^3+A_{012}vw^2+A_{013}v^2w+A_{014}v^3)$$
are the singular points of $X_F\cap\Pi_3$ on the line $\Pi_1$. Note that $\Pi_1$ and its residual curve by a general hyperplane meet at every singular point of $X_F\cap\Pi_3$ on the line $\Pi_1$. Therefore, the second condition is equivalent to the condition that the polynomials $A_{101}vw^3+A_{102}v^2w^2+A_{103}v^3w+A_{104}v^4$ and $A_{011}vw^3+A_{012}v^2w^2+A_{013}v^3w+A_{014}v^4$ have  four common zeros in $\mathbb{P}^1$ with counting multiplicity, i.e., these two polynomials are proportional. This imposes three additional independent conditions on the coefficients of $F$.

The condition that the polynomial $A_{101}vw^3+A_{102}v^2w^2+A_{103}v^3w+A_{104}v^4$ has $k$ zeros without counting multiplicity imposes  $4-k$ additional independent conditions on the coefficients of $F$.
Note that $1\leq k\leq 4$.

We claim that the last condition imposes $k-1$ independent conditions on the coefficients of $F$.
Here we verify the claim only for the case with $k=4$. The other cases with $k=3$ and $2$ can be verified in the same way.

We write the homogenized Hessian matrix of the polynomial
$q_2(0,0,z,u,v)+q_3(0,0,z,u,v)+q_4(0,0,z,u,v)+q_5(0,0,z,u,v)$ along the line $\Pi_1$ as follows:
\tiny
$$
\left(%
\begin{array}{ccc}
2(A_{200}w^3+A_{201}vw^2+A_{202}v^2w+A_{203}v^3) & A_{110}w^3+A_{111}vw^2+A_{112}v^2w+A_{113}v^3 & A_{101}w^3+2A_{102}vw^2+3A_{103}v^2w+4A_{104}v^3 \\
A_{110}w^3+A_{111}vw^2+A_{112}v^2w+A_{113}v^3 & 2(A_{020}w^3+A_{021}vw^2+A_{022}v^2w+A_{023}v^3) & A_{011}w^3+2A_{012}vw^2+3A_{013}v^2w+4A_{014}v^3 \\
A_{101}w^3+2A_{102}vw^2+3A_{103}v^2w+4A_{104}v^3 & A_{011}w^3+2A_{012}vw^2+3A_{013}v^2w+4A_{014}v^3  & 0 \\
\end{array}%
\right).
$$
\normalsize
Let $H(v,w)$ be the determinant of the homogenized Hessian matrix.
The condition that three of the four singular points on $\Pi_1$ is not ordinary double points is equivalent to the condition that $H(v,w)$ vanishes at three points out of the four points defined by $A_{101}w^3v+A_{102}v^2w^2+A_{103}v^3w+A_{104}v^4=0$ and $A_{011}vw^3+A_{012}v^2w^2+A_{013}v^3w+A_{014}v^4=0$ in $\mathbb{P}^1$.
We claim that it imposes three additional independent conditions on the coefficients of $F$.
To verify the claim, we put
$$A_{110}=0, \ \ A_{111}=0, \ \ A_{112}=0, \ \ A_{113}=0, \ \ A_{102}=0, \ \ A_{103}=0$$
$$A_{012}=0, \ \ A_{013}=0 \ \ A_{201}=0, \ \ A_{202}=0, \ \ A_{021}=0, \ \ A_{022}=0.$$

Since $A_{101}w^3v+A_{104}v^4=0$ and $A_{011}vw^3+A_{014}v^4=0$ defines four points in $\mathbb{P}^1$, we have $[\lambda:\mu]\in\mathbb{P}^1$ with
$\lambda(A_{101}, A_{104})=\mu(A_{011}, A_{014})$. We then see that in our restricted situation, the condition is equivalent to the condition that  $A_{101}w^3v+A_{104}v^4=0$ has three common points with
\[\left(A_{101}w^3+4A_{104}v^3\right)^2\left\{\lambda^2\left(A_{200}w^3+A_{203}v^3\right)+\mu^2\left(A_{020}w^3+A_{023}v^3\right)\right\}=0\]
in $\mathbb{P}^1$.
Since this is a condition of codimension $3$ in the restricted situation, it verifies the claim.

These altogether show that the properties  $\mathcal{P}_{G3.1.1}$  is of codimension $> 9$.
\end{proof}

\begin{lemma}
\label{appendix:G321} The statement holds for G\,3.2.1.
\end{lemma}

\begin{proof}
The flag $\mathcal{F}$ is $Flag(0,1,2,3,4)$. It is of dimension 15.
Put
\[\mathcal{P}_{G3.2.1}=\left\{
 \aligned
& X_F\mbox{ contains $\Pi_1$}; \ \ \ X_F\cap \Pi_2\mbox{ contains a line other than $\Pi_1$ passing through $P$};\\
& X_F\cap \Pi_3 \mbox{ contains four singular points other than $P$ on the two lines on $X\cap \Pi_2$}\\
& \mbox{passing through the point $P$.}\\
\endaligned\right\}.
 \]
The condition that $X_F$ contains the line $\Pi_1$ is equivalent to the condition that for each $i=2,3,4,5$, the polynomial $q_i$ contains no $v^i$. The condition that $X_F\cap\Pi_2$ contains a line other than $\Pi_1$ passing through $P$ is equivalent to the condition that $q_3(0,0,0, u,v)$, $q_4(0,0,0,u,v)$ and  $q_5(0,0,0,u,v)$ vanish  at the point other than the point given by $u=0$  in $\mathbb{P}^1$ where $q_2(0,0,0,u,v)$ vanishes.
For $X_F\cap\Pi_3$ in order to have  four singular points other than $P$ on the two lines on $X_F\cap \Pi_2$ passing through the point $P$ is a condition of codimension $4$.
These altogether show that the properties  $\mathcal{P}_{G3.2.1}$  is of codimension $11$.
\end{proof}

\begin{lemma}
\label{appendix:G322} The statement holds for G\,3.2.2.
\end{lemma}

\begin{proof}
The flag $\mathcal{F}$ is $Flag(0,1,2,3,4)$. It is of dimension 15.
Put
\[\mathcal{P}_{G3.2.2}=\left\{
 \aligned
& X_F \mbox{ contains $\Pi_1$};\ \ \
 X_F\cap\Pi_2 \mbox{ contains two lines passing through $P$};\\
& X_F\cap\Pi_3 \mbox{ has three singular points other than $P$ on $\Pi_1$};\\
& X_F\cap\Pi_3 \mbox{ has at least one singular point on $\Pi_1$ that is not an ordinary double point.}
\endaligned\right\}.
 \]
 The condition that $\Pi_1\subset X_F$ is equivalent
to the fact that each $q_{i}(x,y,z,u,v)$ does not have $v^i$
monomial, which is condition of codimension $4$. The condition
that $X_F\cap\Pi_2$ contains another line passing through the point
$P$ is equivalent to the condition that either $q_3(0,0,0,u,v)$,
$q_4(0,0,0,u,v)$ and $q_5(0,0,0,u,v)$ vanish at the points in
$\mathbb{P}^1$ where $q_2(0,0,0,u,v)/u$ vanishes, or
$q_2(0,0,0,u,v)$ is a zero polynomial and $q_3(0,0,0,u,v)$,
$q_4(0,0,0,u,v)$ and $q_5(0,0,0,u,v)$ have common root in
$\mathbb{P}^1$. Thus, the condition that $X_F\cap\Pi_2$ contains
another line passing through the point $P$ is a condition of
codimension $3$. For the surface $X_F\cap \Pi_3$ to have three
singular points on $\Pi_1$ other than $P$ is a condition of
codimension $3$. Arguing as in the proof of
Lemma~\ref{appendix:G31}, we can see that the condition that one
of the singular points of $X_F\cap\Pi_3$ on $\Pi_1$ is not an
ordinary double point is a condition of codimension $1$.
These altogether show that the properties  $\mathcal{P}_{G3.2.2}$  is of codimension $> 10$.
\end{proof}

\begin{lemma}
\label{appendix:G323} The statement holds for G\,3.2.3.
\end{lemma}

\begin{proof}
The flag $\mathcal{F}$ is $Flag(0,1,2,3,4)$. It is of dimension 15.
Put
\[\mathcal{P}_{G3.2.3}=\left\{
 \aligned
& X_F\mbox{ contains $\Pi_1$};\\
& X_F\cap \Pi_2 \mbox{ contains a line other than $\Pi_1$ passing through $P$};\\
& X_F\cap \Pi_3 \mbox{ contains two non-ordinary singular points on $\Pi_1$.}\\
\endaligned\right\}.
 \]
The condition that $X_F$ contains the line $\Pi_1$ is equivalent to the condition that for each $i=2,3,4,5$, the polynomial $q_i$ contains no $v^i$. The condition that $X_F\cap\Pi_2$ contains a line other than $\Pi_1$ passing through $P$ is equivalent to the condition that $q_3(0,0,0, u,v)$, $q_4(0,0,0,u,v)$ and  $q_5(0,0,0,u,v)$ vanish  at the point other than the point given by $u=0$  in $\mathbb{P}^1$ where $q_2(0,0,0,u,v)$ vanishes.
As in the proof of Lemma~\ref{appendix:G31}, we can see that for $X_F\cap\Pi_3$ to have  two non-ordinary singular points on $\Pi_1$ is a condition of codimension $4$.
These altogether show that the properties  $\mathcal{P}_{G3.2.3}$  is of codimension $> 10$.
\end{proof}

\begin{lemma}
\label{appendix:G324} The statement holds for G\,3.2.4.
\end{lemma}

\begin{proof}
The flag $\mathcal{F}$ is $Flag(0,1,2,3,4)$. It is of dimension 15.
Put
\[\mathcal{P}_{G3.2.4}=\left\{
 \aligned
& X_F\mbox{ contains $\Pi_1$}; \ \ \ X_F\cap \Pi_2\mbox{ contains a line other than $\Pi_1$ passing through $P$};\\
& X_F\cap \Pi_3 \mbox{ contains two singular points other than $P$ on the line $\Pi_1$};\\
& \Pi_1 \mbox{  meets its residual curve by a general hyperplane section of $X_F\cap\Pi_3$ in $\Pi_3$ }\\
&  \mbox{only at three points.}\\ \endaligned\right\}.
 \]
  We write \[q_i(0,0,z,u,v)=\sum_{r+s+t=i}A_{rst}z^ru^sv^t,\]
 where $A_{rst}$'s are constants.

The condition that $X_F\cap\Pi_3$ contains the line $\Pi_1$ is equivalent to $A_{00t}=0$ for $t=2$, $3$, $4$ and $5$.
 The condition that $X_F\cap\Pi_2$ contains a line other than $\Pi_1$ passing through $P$ is equivalent to the condition that $q_3(0,0,0, u,v)$, $q_4(0,0,0,u,v)$ and  $q_5(0,0,0,u,v)$ vanish  at the point other than the point given by $u=0$  in $\mathbb{P}^1$ where $q_2(0,0,0,u,v)$ vanishes.
For $X_F\cap\Pi_3$ in order to have  two singular points other than $P$ on the line $\Pi_1$ and  for $\Pi_1$ to meet its residual curve by a general hyperplane section of $X_F\cap\Pi_3$ in $\Pi_3$ only at three point are equivalent to the condition that the polynomials $A_{101}vw^3+A_{102}v^2w^2+A_{103}v^3w+A_{104}v^4$ and $A_{011}vw^3+A_{012}v^2w^2+A_{013}v^3w+A_{014}v^4$ have  four common zeros in $\mathbb{P}^1$ with counting multiplicity and the polynomial $A_{101}vw^3+A_{102}v^2w^2+A_{103}v^3w+A_{104}v^4$ has three zeros without counting multiplicity. This condition is of codimention $4$.
These altogether show that the properties  $\mathcal{P}_{G3.2.4}$  is of codimension $11$.
\end{proof}
\begin{lemma}
\label{appendix:G325} The statement holds for G\,3.2.5.
\end{lemma}

\begin{proof}
The flag $\mathcal{F}$ is $Flag(0,1,2,3,4)$. It is of dimension 15.
Put
\[\mathcal{P}_{G3.2.5}=\left\{
 \aligned
& X_F\mbox{ contains $\Pi_1$}; \ \ \ X_F\cap \Pi_2\mbox{ contains a line other than $\Pi_1$ passing through $P$};\\
& X_F\cap \Pi_3 \mbox{ contains  one singular points other than $P$ on the line $\Pi_1$};\\
& \mbox{either  $\Pi_1$ meets its residual curve by a general hyperplane section in $\Pi_3$
}\\
& \mbox{only at singular points or}\\
& \mbox{$\Pi_1$ meets its residual curve by a general hyperplane section in $\Pi_3$}\\
& \mbox{only at three points and $X_F\cap\Pi_3$ has a non-ordinary singular point  $P$.}\\
\endaligned\right\}.
 \]
The first two conditions imposes seven independent conditions on
the coefficients of $F$ as before. For the surface $X_F\cap \Pi_3$
to have a  singular point on $\Pi_1$ other than $P$ and plus for
$\Pi_1$ to meet its residual curve by a general hyperplane section
in $\Pi_3$ only at singular points impose at least four
independent conditions on the coefficients of $F$.  Meanwhile, for
the surface $X_F\cap \Pi_3$ to have a  singular point on $\Pi_1$
other than $P$ and plus for $\Pi_1$ to meet its residual curve by
a general hyperplane section in $\Pi_3$ only at three points
impose at least four independent conditions on the coefficients of
$F$. However, the condition that $X_F\cap\Pi_3$ has a non-ordinary
singular point  $P$ is of codimension $1$. Therefore, the
properties  $\mathcal{P}_{G3.2.5}$  is of codimension $11$.

These altogether show that the properties  $\mathcal{P}_{G3.2.5}$  is of codimension $11$.
\end{proof}

\begin{lemma}
\label{appendix:G326} The statement holds for G\,3.2.6.
\end{lemma}

\begin{proof}
The flag $\mathcal{F}$ is $Flag(0,1,2,3,4)$. It is of dimension 15.
Put
\[\mathcal{P}_{G3.2.6}=\left\{
 \aligned
& X_F\mbox{ contains $\Pi_1$}; \ \ \ X_F\cap \Pi_2\mbox{ contains a line other than $\Pi_1$ passing through $P$};\\
& \Pi_1 \mbox{  meets its residual curve by a general hyperplane section of $X_F\cap\Pi_3$ in $\Pi_3$ }\\
&  \mbox{at at most two points.}\\ \endaligned\right\}.
 \]
The first two conditions imposes seven independent conditions on the coefficients of $F$ as before.

For the last condition,
we write \[q_i(0,0,z,u,v)=\sum_{r+s+t=i}A_{rst}z^ru^sv^t,\]
 where $A_{rst}$'s are constants.

The last condition is equivalent to the condition that
either the polynomials $A_{101}vw^3+A_{102}v^2w^2+A_{103}v^3w+A_{104}v^4$ and $A_{011}vw^3+A_{012}v^2w^2+A_{013}v^3w+A_{014}v^4$ have  a common zero at $v=0$  with multiplicity at least $3$ or they are proportional and  have only two zeros (without counting multiplicities). The former and the latter are both a condition of codimension at least $4$.

Therefore, the properties  $\mathcal{P}_{G3.2.6}$  is of
codimension at least $11$.
\end{proof}

\begin{lemma}
\label{appendix:G331} The statement holds for G\,3.3.1.
\end{lemma}

\begin{proof}
The flag $\mathcal{F}$ is $Flag(0,2,3,4)$. It is of dimension 14. Put
\[\mathcal{P}_{G3.3.1}=\left\{
 \aligned
& X_F\cap\Pi_2 \mbox{ contains three lines  passing through $P$};\\
& \mbox{either $X_F\cap\Pi_3$ has a singular point  on $\Pi_2$ other than  $P$ or one of the lines $L_i$ meets }\\
& \mbox{its residual curve by a general hyperplane section in $\Pi_3$  at at most three points.}\\
\endaligned\right\}.
 \]

 The condition that $X_F\cap\Pi_2$ contains three lines passing through the point $P$ is equivalent to the condition that
$q_2(0,0,0,u,v)$ is identically zero; $q_4(0,0,0,u,v)$ and  $q_5(0,0,0,u,v)$ vanish  at the three points  in $\mathbb{P}^1$ where $q_3(0,0,0,u,v)$ vanishes. For the surface $X_F\cap \Pi_3$ to have a  singular point on $\Pi_2$ other than $P$ is a condition of codimension $1$.
For one of the lines $L_i$ to meet its residual curve by a general hyperplane section in $\Pi_3$  at at most three points is also a condition of codimension $1$. 
 These altogether show that the properties  $\mathcal{P}_{G3.3.1}$  is of codimension $> 9$.
\end{proof}

\begin{lemma}
\label{appendix:G332} The statement holds for G\,3.3.2.
\end{lemma}

\begin{proof}
The flag $\mathcal{F}$ is $Flag(0,3,4)$. It is of dimension 12.
Put
 \[\mathcal{P}_{G3.3.2}=\left\{
 \aligned
& X_F\cap\Pi_3 \mbox{ contains three lines passing through $P$};\\
& \mbox{either one of the lines $L_i$ meets its residual curve by a general hyperplane section}\\
& \mbox{in $\Pi_3$ at at most one smooth point or}\\
& \mbox{one of the lines $L_i$ meets its residual curve by a general hyperplane section in $\Pi_3$}\\
& \mbox{at at most two smooth points and $X_F\cap\Pi_3$ has a non-ordinary singular point at $P$.}\\
\endaligned\right\}.
 \]

The condition that $X_F\cap\Pi_3$ contains three lines passing through the point $P$ is equivalent to the condition that
$q_4(0,0,z,u,v)$ and  $q_5(0,0,z,u,v)$ vanish  at  three points  in $\mathbb{P}^2$ where both $q_2(0,0,z,u,v)$ and $q_3(0,0,z,u,v)$ vanish.

For one of the lines $L_i$ to meet its residual curve by a general hyperplane section in $\Pi_3$  at at most one smooth is also a condition of codimension at least $2$.
For one of the lines $L_i$ to meet its residual curve by a general hyperplane section in $\Pi_3$  at at most two smooth is also a condition of codimension at least $1$. 
 The condition that $X_F\cap\Pi_3$ has a non-ordinary singular point at $P$ is equivalent to the condition that the quadratic polynomial $q_2(0,0,z,u,v)$ is singular in variables
$z$, $u$, $v$. This is a condition of codimension $1$.
These altogether show that the properties  $\mathcal{P}_{G3.3.2}$  is of codimension $> 7$.
\end{proof}

\end{document}